\documentclass[a4paper,10pt,oneside]{amsart}

\usepackage{amsmath}
\usepackage{amssymb}
\usepackage{amsthm}
\usepackage{latexsym,mathrsfs,bbold}
\usepackage{graphicx}

\newcommand{\wn}{\includegraphics{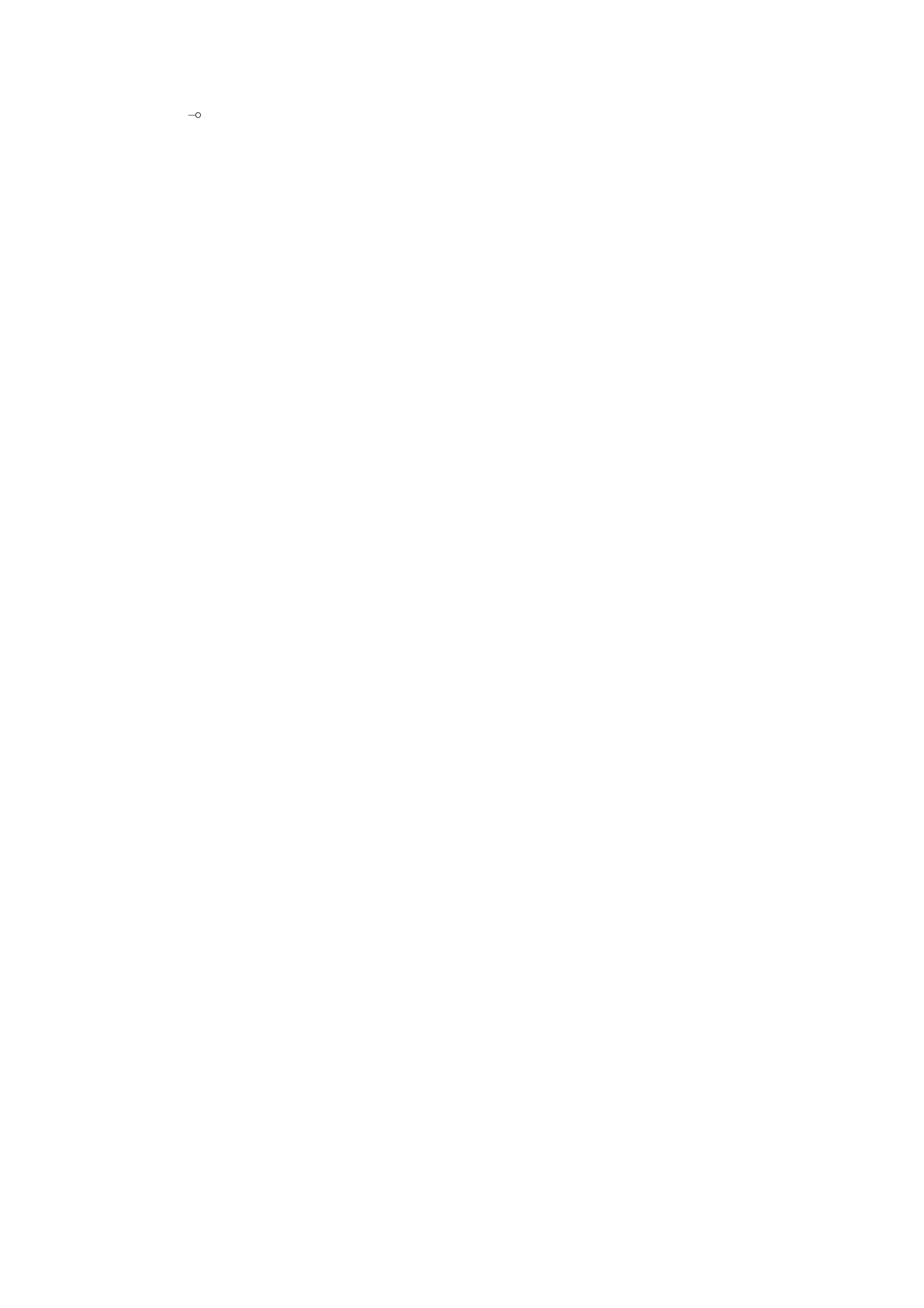}}
\newcommand{\bigo}{\mathcal{O}}

\newtheorem{theorem}{Theorem}[section]
\newtheorem{proposition}[theorem]{Proposition}
\newtheorem{lemma}[theorem]{Lemma}

\theoremstyle{definition}
\newtheorem*{definition}{Definition}

\begin{document}

\title[Planar Maps with Prescribed Degrees]{Limit Laws of Planar Maps\\ with Prescribed Vertex Degrees}
\author{Gwendal Collet}
\address{
Institute of Discrete Mathematics and Geometry\\ 
TU Wien\\
Wiedner Hauptstra\ss{}e 8--10/104\\
1040 Wien\\
Austria
}
\email{gwendal.collet@tuwien.ac.at}

\author{Michael Drmota}
\address{
Institute of Discrete Mathematics and Geometry\\ 
TU Wien\\
Wiedner Hauptstra\ss{}e 8--10/104\\
1040 Wien\\
Austria
}
\email{drmota@tuwien.ac.at}

\author{Lukas Daniel Klausner}
\address{
Institute of Discrete Mathematics and Geometry\\ 
TU Wien\\
Wiedner Hauptstra\ss{}e 8--10/104\\
1040 Wien\\
Austria
}
\email{lukas.d.klausner@tuwien.ac.at}

\thanks{Partially supported by the Austrian Science Fund (FWF) project SFB F50-02 ``Shape Characteristics of Planar Maps and Planar Graphs''.}

\subjclass[2010]{Primary 05C07, 05C10; Secondary 05A16, 05C30}

\keywords{planar maps, central limit theorem, analytic combinatorics, mobiles}

\begin{abstract}
We prove a general multi-dimensional central limit theorem for the expected number of vertices of a given degree in the family of planar maps whose vertex degrees are restricted to an arbitrary (finite or infinite) set of positive integers $D$. Our results rely on a classical bijection with mobiles (objects exhibiting a tree structure), combined with refined analytic tools to deal with the systems of equations on infinite variables that arise. We also discuss possible extensions to maps of higher genus and to weighted maps.
\end{abstract}

\maketitle

\section{Introduction and Results}

In this paper we study statistical properties of planar maps, which
are connected planar graphs, possibly with loops and multiple edges, together with
an embedding into the plane. Such objects are frequently used to describe 
topological features of geometric arrangements in two or three spatial dimensions.
Thus, the knowledge of the structure and of properties of ``typical'' objects
may turn out to be very useful in the analysis of particular algorithms that operate on planar maps.
We also want to emphasise the interactions with other fields such as
statistical physics, probability theory, limiting continuous objects and algebraic geometry.
We say that a map is {\it rooted} if an edge $e$ is distinguished and oriented.
It is called the root edge. 
The first vertex $v$ of this oriented edge is called the root vertex. The face to the right of $e$ is called the
root face and is usually taken as the outer (or infinite)
face. Similarly, we call a planar map {\it pointed} if just a vertex $v$ is distinguished.
However, we have to be really careful with the model. In rooted maps the root edge
{\it destroys} potential symmetries, which is not the case if we consider pointed maps.

The enumeration of rooted maps is a classical subject, initiated by Tutte in
the 1960s, see \cite{Tutte}. Among many other results, Tutte computed the number
$M_n$ of rooted maps with $n$ edges, proving the formula
\[
M_n = \frac{2(2n)!}{(n+2)! n!} 3^n 
\]
which directly provides the asymptotic formula
\[
M_n \sim \frac 2{\sqrt \pi} n^{-5/2} 12^n.
\]
We are mainly interested in planar maps with degree restrictions.
Actually, it turns out that the subexponential part $n^{-5/2}$ of the
asymptotic expansion is quite universal and hence to a certain extent
describes the ``physics'' of the combinatorial object. Furthermore, there is always a (very general) 
central limit theorem for the number of vertices of given degree.

\begin{theorem}\label{Th1}
Suppose that $D$ is an arbitrary set of positive integers but not a
subset of $\{1,2\}$. Let $\mathcal{M}_D$ be the class of planar rooted maps with the property
that all vertex degrees are in $D$ and let $M_{D,n}$ denote the number of
maps in $\mathcal{M}_D$ with $n$ edges. 
Furthermore, if $D$ contains only even numbers, then set $\overline d = {\rm gcd}\{i: 2i \in D\}$;
set $\overline d = 1$ otherwise.

Then there exist positive constants
$c_D$ and $\rho_D$ with
\begin{equation}\label{eqTh11}
M_{D,n} \sim c_D n^{-5/2} \rho_D^{-n}, \qquad n \equiv 0 \bmod \overline d.
\end{equation}
Furthermore, let $X_n^{(d)}$ denote the random variable counting vertices of degree $d$ ($\in D$)
in maps in $\mathcal{M}_D$. Then $\mathbb{E}(X_n^{(d)}) \sim \mu_d n$ and $\mathbb{V}(X_n^{(d)}) \sim \sigma_d^2 n$
for some constants $\mu_d > 0$ and $\sigma_d \geq 0$ and for $n\equiv 0 \bmod \overline d$,  and the (possibly infinite) random vector ${\bf X}_n = (X_n^{(d)})_{d\in D}$
($n \equiv 0 \bmod \overline d$) satisfies a central limit theorem, that is, 
\begin{equation}\label{eqTh12}
\frac 1{\sqrt n} \left( {\bf X}_n - \mathbb{E}({\bf X}_n) \right),  \qquad n \equiv 0 \bmod \overline d, 
\end{equation}
converges weakly to a centered Gaussian random variable ${\bf Z}$ (in $\ell^2$).
\end{theorem}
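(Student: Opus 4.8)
The plan is to reduce the problem to the enumeration of labelled mobiles, to perform a singularity analysis of the associated generating functions, and finally to obtain the limit law from a multivariate quasi-powers argument; the recurring difficulty is that an infinite set $D$ forces us to work with systems in infinitely many variables. First I would use the classical mobile bijection to encode pointed rooted maps whose vertex degrees all lie in $D$ as suitably labelled mobiles. Introducing a variable $x$ marking edges and a family $\mathbf{u}=(u_d)_{d\in D}$ of variables marking vertices of degree $d$, the tree structure of mobiles yields a functional equation
\begin{equation*}
R = 1 + \Phi(x,\mathbf{u},R),
\end{equation*}
for a one-point series $R=R(x,\mathbf{u})$, where $\Phi$ is an explicit power series whose monomials are indexed by the elements of $D$; the generating function $M^{\bullet}(x,\mathbf{u})$ of pointed rooted maps is then an explicit algebraic combination of $R$ and a few auxiliary series. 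When $D$ is infinite both $\Phi$ and $\mathbf{u}$ are infinite-dimensional, so I would set the fixed-point equation in a Banach space of sequences and solve it by the analytic implicit function theorem.

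Next, at $\mathbf{u}=\mathbf{1}$ the scalar equation $R=1+\Phi(x,R)$ is of smooth implicit-function type, and its dominant singularity $\rho_D$ is the branch point where $\partial_R\bigl(R-1-\Phi\bigr)=0$; this produces a square-root singularity $R\sim R_0-c\sqrt{1-x/\rho_D}$ and hence a square-root singularity of $M^{\bullet}$. Since pointing a vertex corresponds to a differentiation, the rooted map series $M(x,\mathbf{u})$ is recovered by integrating out the pointing, which raises the singular exponent from $1/2$ to $3/2$; the transfer theorem then gives $M_{D,n}\sim c_D\,n^{-5/2}\rho_D^{-n}$. The hypothesis $D\not\subseteq\{1,2\}$ is exactly what guarantees a genuine branch point (rather than a pole or a merely linear equation), and when $D$ contains only even numbers the series is supported on a sublattice, which creates $\overline{d}$ conjugate dominant singularities on $|x|=\rho_D$ and forces the congruence $n\equiv 0\bmod\overline{d}$.

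For the limit law I would track the perturbation of the singularity as $\mathbf{u}$ varies near $\mathbf{1}$. The analytic implicit function theorem (in the Banach setting) shows that $\rho_D(\mathbf{u})$ is analytic and that the exponent stays equal to $3/2$ uniformly, so that $[x^n]M(x,\mathbf{u})\sim C(\mathbf{u})\,n^{-5/2}\rho_D(\mathbf{u})^{-n}$. Consequently the probability generating function
\begin{equation*}
\mathbb{E}\bigl(\mathbf{u}^{\mathbf{X}_n}\bigr)=\frac{[x^n]M(x,\mathbf{u})}{[x^n]M(x,\mathbf{1})}\sim \frac{C(\mathbf{u})}{C(\mathbf{1})}\left(\frac{\rho_D(\mathbf{1})}{\rho_D(\mathbf{u})}\right)^{n}
\end{equation*}
is of quasi-powers form. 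Applying Hwang's quasi-powers theorem to each finite linear combination $\sum_d t_d X_n^{(d)}$ (substituting $u_d=\exp(t_d s/\sqrt{n})$) yields $\mathbb{E}(X_n^{(d)})\sim\mu_d n$ and $\mathbb{V}(X_n^{(d)})\sim\sigma_d^2 n$, with $\mu_d=-\partial_{u_d}\log\rho_D(\mathbf{1})>0$ and the covariances read off from the Hessian of $\log\rho_D$, and proves asymptotic normality of all finite-dimensional marginals (the variance may degenerate, so in general only $\sigma_d\ge 0$).

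The hard part will be making the infinite-variable analysis rigorous and upgrading the finite-dimensional central limit theorem to weak convergence in $\ell^2$. This requires, first, a version of the analytic implicit function theorem valid uniformly in the infinitely many variables, together with quantitative control of $\rho_D(\mathbf{u})$ and of the Hessian of $\log\rho_D$, so that the limiting covariance defines a bounded (in fact trace-class) operator on $\ell^2$; second, a Cram\'er--Wold argument combined with the tightness estimate $\sum_{d\in D}\mathbb{V}(X_n^{(d)})=\bigo(n)$, which guarantees that the Gaussian limit $\mathbf{Z}$ indeed takes its values in $\ell^2$; and third, uniform error bounds in the transfer theorem so that the quasi-powers estimate holds simultaneously over the whole family of marking variables. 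In the periodic case $D\subseteq 2\mathbb{Z}$ the singularity analysis must additionally be carried out at each of the $\overline{d}$ dominant singularities.
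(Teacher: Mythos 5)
Your overall strategy coincides with the paper's: encode vertex-pointed rooted maps as mobiles, derive a positive functional equation for the mobile series, extract a square-root singularity that integrates up to exponent $3/2$ (hence $n^{-5/2}$), and obtain the limit law by perturbing the singularity in the marking variables, using a functional-analytic quasi-powers argument plus tightness to upgrade to weak convergence in $\ell^2$. Two points in your plan are genuine gaps rather than routine details, however.

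First, your single equation $R = 1 + \Phi(x,\mathbf{u},R)$ only covers the bipartite case. When $D$ contains an odd number, the mobile decomposition produces black--black edges, and one needs a strongly connected system of \emph{two} equations, for the series $L$ (mobiles rooted at a black vertex) and $R$ (rooted at a white vertex), whose coefficients are Motzkin-path counts; the CLT machinery then requires reducing this system to a single equation $R = H(t,z,(x_i),R)$ while keeping control of all the derivative conditions. Second, and more importantly: when $D$ is infinite, $\Phi$ has a \emph{finite} radius of convergence in $R$ (radius $1/4$ in the bipartite case; the boundary $2L - L^2 + 4R = 1$ in the general case), so it is not automatic that the branch point where $\partial_R(R-1-\Phi)=0$ is attained strictly inside the domain of convergence --- if it were not, the singularity would not be of square-root type and the whole analysis would collapse. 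The paper proves interiority by showing that $H(x) = \sum_{2i\in D}(i-1)\binom{2i-1}{i}x^i$ tends to $+\infty$ as $x \to 1/4^-$ (using $(i-1)\binom{2i-1}{i} \sim 4^i\sqrt{i}/(2\sqrt{\pi})$), respectively that the relevant derivatives of the system diverge as $2L-L^2+4R \to 1$. This is precisely the step that fails for weights growing too fast (hence the condition $\alpha \ge -\frac32$ in the Boltzmann-map theorem), so it cannot be treated as automatic; your proposal presents the existence of the branch point as a consequence of $D\not\subseteq\{1,2\}$ alone, which only rules out degeneracy of the equation, not the domain-of-convergence obstruction. The remaining ingredients --- the $\overline{d}$ conjugate singularities in the periodic case, the Cram\'er--Wold plus tightness argument, and $\mu_d = -\partial_{u_d}\log\rho_D(\mathbf{1})$ --- match the paper's route.
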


Note that maps where all vertex degrees are $1$ or $2$ are very easy to 
characterise and are not really of interest, and that actually, their asymptotic
properties are different from the general case. It is therefore natural to
assume that $D$ is not a subset of $\{1,2\}$. Also note that for a given $D$,
the constants can be computed to an arbitrary standard of precision.

Since we can equivalently consider dual maps, this kind of problem is the
same as the one considering planar maps with restrictions on the face valencies.
This means that the same results hold if we replace {\it vertex degree} by
{\it face valency}. 
For example, if we assume that all face valencies equal $4$, then we just consider
planar quadrangulations (which have also been studied by Tutte \cite{Tutte}).
In fact, our proofs will refer just to face valencies.

Theorem~\ref{Th1} goes far beyond known results.
There are some general results for the Eulerian case where all vertex 
degrees are even. First, the asymptotic expansion (\ref{eqTh11}) is known
for Eulerian maps by 
Bender and Canfield \cite{BC}. Furthermore, a central limit theorem of the
form (\ref{eqTh12}) is known for all Eulerian maps (without degree restrictions)
\cite{DGM}. However, in the non-Eulerian case there are almost no results of
this kind; there is only a one-dimensional central limit theorem for $X_n^{(d)}$
for all planar maps \cite{DP}.

The uniform distribution of planar maps according to the number of edges is not the only distribution that has been studied. 
Many probabilistic results on planar maps have also be extended to other probability distributions, based on $q$-Boltzmann maps.
Let $\textbf{q}=(q_1,q_2,\dots)$ be a sequence of non-negative weights. A $\textbf{q}$-\emph{Boltzmann map} is a random planar map 
with arbitrary vertex degrees, where the probability of choosing a given map $M$ is proportional to 
$$\prod_{i>0} q_i^{\# \textrm{vertices of degree i in }M}.$$
When such a procedure describes a well-defined probability distribution, $\textbf{q}$ is called \emph{admissible}. 
(We could equivalently use weights of the form 
$\prod q_i^{\# \textrm{faces of degree i in }M}$ by duality.)

In~\cite{MM07,MW08}, the authors showed that, under some integrability conditions, random $\textbf{q}$-Boltzmann maps 
have the same profile as random uniform planar maps. In this spirit we will show that 
(under certain conditions) Theorem~\ref{Th1} also applies to $\textbf{q}$-Boltzmann maps.

\begin{theorem}\label{Th1.2}
Let $\textbf{q}=(q_1,q_2,\dots)$ be a weight sequence with $q_i = \Theta( i^{\alpha} )$ for 
some $\alpha \ge - \frac 32$ and consider corresponding $\textbf{q}$-{Boltzmann maps}.  
Furthermore, let $Y_n^{(d)}$ denote the random variable counting vertices of degree $d$.
Then $\mathbb{E}(Y_n^{(d)}) \sim \mu_d n$ and $\mathbb{V}(Y_n^{(d)}) \sim \sigma_d^2 n$
for some constants $\mu_d > 0$ and $\sigma_d \geq 0$, and
the infinite random vector ${\bf Y}_n = (Y_n^{(d)})_{d\ge 1}$
satisfies a central limit theorem.
\end{theorem}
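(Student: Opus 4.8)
The plan is to reduce Theorem~\ref{Th1.2} to the analytic machinery already developed for Theorem~\ref{Th1}. Passing to the dual map, I work with \emph{face} valencies, which is the natural setting for the mobile bijection. A $\mathbf{q}$-Boltzmann map conditioned to have $n$ edges is simply a reweighting of the uniform model: if $M(x,\mathbf{s}) = \sum_{M} x^{e(M)} \prod_{d} s_d^{\,n_d(M)}$ is the multivariate map generating function ($n_d$ counting degree-$d$ faces), then the joint law of the vector $\mathbf{Y}_n=(Y_n^{(d)})_{d\ge1}$ is governed by the ratio $[x^n]\,M(x, \mathbf{q}\,\mathbf{s}) / [x^n]\,M(x,\mathbf{q})$, where $\mathbf{q}\,\mathbf{s}$ denotes the coordinatewise product. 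Thus it suffices to locate and control the dominant singularity in $x$ of $M(x,\mathbf{q}\,\mathbf{s})$, viewed as an analytic perturbation in $\mathbf{s}$ around $\mathbf{s}=\mathbf{1}$.

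First I would translate the weights onto mobiles. Under the bijection, a face of degree $i$ contributes a local factor $q_i$ together with a binomial (Catalan-type) factor $\kappa_i$ coming from the admissible labellings, so that the mobile generating function satisfies a fixed-point system built from the weight series
\[
\phi(z) \;=\; \sum_{i\ge1} q_i\, \kappa_i\, z^{i},
\]
whose radius of convergence is dictated by $\kappa_i \sim 4^i/\sqrt{i}$, while the hypothesis $q_i = \Theta(i^{\alpha})$ only affects the polynomial correction. The crucial consequence of $\alpha \ge -\tfrac32$ is that $\phi'(z)$ \emph{diverges} as $z$ approaches the boundary of its disk of convergence: the boundary term behaves like $\sum_i i^{\alpha+1/2}$, which diverges precisely when $\alpha \ge -\tfrac32$. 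Hence the critical (tangency) point defining the square-root singularity is forced to lie \emph{strictly inside} the disk of convergence, where $\phi$ is analytic; for $\alpha < -\tfrac32$ one would instead fall into a non-generic regime with a boundary singularity and a different subexponential exponent.

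With the critical point in the interior, the situation is exactly the generic one handled in the proof of Theorem~\ref{Th1}: the relevant generating function has a square-root singularity at $\rho(\mathbf{s})$, with $\rho(\mathbf{s})$ depending analytically on finitely many marking variables near $\mathbf{s}=\mathbf{1}$. Singularity analysis then yields the $n^{-5/2}$ asymptotics for $[x^n]M(x,\mathbf{q})$ and, via the quasi-powers framework, a Gaussian limit law for each finite-dimensional marginal of $\mathbf{Y}_n$, with $\E(Y_n^{(d)}) \sim \mu_d n$ and $\V(Y_n^{(d)}) \sim \sigma_d^2 n$ where $\mu_d > 0$. Upgrading this to weak convergence of the whole vector in $\ell^2$ proceeds exactly as for Theorem~\ref{Th1}, by bounding $\sum_{d} \V(Y_n^{(d)})/n$ uniformly in $n$ and invoking the corresponding tightness criterion; the decay forced on the densities $\mu_d$ by analyticity guarantees the required summability.

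The main obstacle is the uniform control of the analytic structure in the presence of infinitely many weights and marking variables. Concretely, one must show that the fixed-point system remains analytically solvable, with a single dominant square-root singularity, after the coordinatewise tilt $\mathbf{s} \mapsto \mathbf{q}\,\mathbf{s}$, and that the perturbed singularity $\rho(\mathbf{s})$ together with the limiting covariance operator are well-defined on $\ell^2$. This is precisely the step where $\alpha \ge -\tfrac32$ is indispensable: it keeps us in the generic universality class (interior critical point, square-root singularity) rather than in a non-generic regime. The remaining work is then a matter of re-running the perturbative and tightness arguments behind Theorem~\ref{Th1} while tracking the dependence on $\mathbf{q}$, which I expect to be essentially routine once the interior critical point has been established.
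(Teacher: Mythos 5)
Your proposal matches the paper's own (very brief) argument: substitute the weights $q_i$ for the marking variables in the mobile fixed-point system, observe that $q_i\,\kappa_i$ at the boundary contributes terms of order $i^{\alpha+1/2}$ whose sum diverges exactly when $\alpha\ge-\tfrac32$, so the critical point lies strictly inside the domain of convergence and the generic square-root singularity persists, after which the quasi-powers and tightness machinery from the proof of Theorem~\ref{Th1} applies verbatim with $x_i$ replaced by $x_i q_i$. This is essentially the same approach as the paper, which likewise reduces everything to verifying $2L_0-L_0^2+4R_0<1$ via the divergence of $\sum_i \sqrt{i}\,i^{\alpha}$ and then leaves the remaining details to the reader.
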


Again, for a given $\textbf{q}=(q_1,q_2,\dots)$, the constants can be computed to an arbitrary standard of precision.
\bigskip

Graphs can also be embedded on different surfaces than the plane. Given a non-negative integer $g$, a map of genus $g$ is then a connected graph with a proper embedding (where any face is simply connected) on the torus with $g$ holes. In this setting, planar maps, drawn on the plane (or equivalently, on the sphere), are simply maps of genus $0$. The first results in higher genus were obtained by Bender and Canfield~\cite{BC86}, providing the asymptotic number of rooted maps of genus $g$ with $n$ edges:
\[
	M_n^{(g)} \sim t_g n^{5(g-1)/2} 12^n.
\]
These asymptotics were later rederived via bijective methods by Chapuy, Marcus and Schaeffer~\cite{CMS07}, leading to numerous developments in the study of maps on any surface in recent years.

In this context, we will establish a generalisation of Theorem~\ref{Th1} in the bipartite case.

\begin{theorem}\label{Th3}
Suppose that $D \neq \{2\}$ is an arbitrary set of positive even integers, let $\mathcal{M}^{(g)}_D$ be the class of rooted bipartite maps of genus $g$ with the property
that all vertex degrees are in $D$ and let $M^{(g)}_{D,n}$ denote the number of
maps in $\mathcal{M}^{(g)}_D$ with $n$ edges. 
Furthermore, set $\overline d = {\rm gcd}\{i: 2i \in D\}$.

Then there exist positive constants
$c^{(g)}_D$ and $\rho^{(g)}_D$ with
\begin{equation}\label{eqTh31}
M^{(g)}_{D,n} \sim c^{(g)}_D n^{5(g-1)/2} (\rho^{(g)}_D)^{-n}, \qquad n \equiv 0 \bmod \overline d.
\end{equation}

Furthermore, let $X_n^{(d)}$ denote the random variable counting vertices of degree $d$ ($\in D$)
in maps in $\mathcal{M}^{(g)}_D$. Then $\mathbb{E}(X_n^{(d)}) \sim \mu_d n$ and $\mathbb{V}(X_n^{(d)}) \sim \sigma_d^2 n$
for some constants $\mu_d > 0$ and $\sigma_d \geq 0$ and for $n\equiv 0 \bmod \overline d$,  and the (possibly infinite) random vector ${\bf X}_n = (X_n^{(d)})_{d\in D}$
($n \equiv 0 \bmod \overline d$) satisfies a central limit theorem.
\end{theorem}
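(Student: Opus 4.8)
The plan is to reduce everything to the planar (genus $0$) analysis already carried out for Theorem~\ref{Th1}, exploiting the bijective decomposition of higher-genus maps. First I would apply the Chapuy--Marcus--Schaeffer bijection \cite{CMS07}, which encodes a pointed rooted bipartite map of genus $g$ by a \emph{labelled mobile} drawn on the genus-$g$ surface, i.e.\ a one-face map carrying a mobile structure whose vertex and face degrees are governed by $D$. The bipartiteness hypothesis (all degrees even) is exactly what makes this bijection applicable and is the reason we restrict to even $D$; the passage from pointed rooted to rooted maps is handled by the same marked-vertex relation as in the planar case. The crucial structural feature is that every such genus-$g$ mobile decomposes into a \emph{scheme} --- the multigraph obtained by iteratively deleting vertices of degree $1$ and smoothing vertices of degree $2$ --- together with planar mobile pieces (chains along edges, trees at vertices) grafted onto it. By Euler's formula the scheme has at most $\bigo(g)$ vertices and edges, so there are only finitely many schemes for fixed $g$, and each contributes a term that is a fixed rational function of the planar building blocks. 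Concretely, introducing the multivariate generating function $M^{(g)}_D(z,\mathbf{u})$ with $z$ marking edges and $u_d$ marking vertices of degree $d$, I would establish a representation
\[
M^{(g)}_D(z,\mathbf{u}) = \sum_{\mathcal{S}} P_{\mathcal{S}}\bigl( z, \mathbf{u}, W(z,\mathbf{u}), W'(z,\mathbf{u}), \dots \bigr),
\]
where the sum is over the finite set of genus-$g$ schemes $\mathcal{S}$, each $P_{\mathcal{S}}$ is an explicit rational function, and $W(z,\mathbf{u})$ collects the planar two-point and mobile series whose singular behaviour was already determined in the proof of Theorem~\ref{Th1}. Since all grafted pieces are planar, the dominant singularity of each $W$ sits at the same $\rho_D = \rho_D(\mathbf{u})$ as in genus $0$ and is of square-root type; hence $\rho^{(g)}_D = \rho_D$.

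The asymptotic \eqref{eqTh31} then follows from singularity analysis. Composing the planar square-root singularity of $W$ through a scheme of genus $g$ turns the $(1-z/\rho_D)^{3/2}$-type behaviour into a dominant singularity of the form $(1-z/\rho_D)^{-(5g-3)/2}$, the exponent being dictated by the bounded combinatorics of the scheme (with the cubic schemes maximising the number of edges providing the leading contribution). The standard transfer theorem yields $[z^n]M^{(g)}_D(z,\mathbf{1}) \sim \mathrm{const}\cdot n^{(5g-3)/2 - 1}\rho_D^{-n} = \mathrm{const}\cdot n^{5(g-1)/2}\rho_D^{-n}$, which for $g = 0$ recovers the $n^{-5/2}$ of \eqref{eqTh11}. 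The periodicity modulo $\overline d$ is inherited verbatim from the planar case, arising from the finitely many dominant singularities on the circle $|z| = \rho_D$.

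For the central limit theorem I would run the same perturbation argument as in Theorem~\ref{Th1}. Because $M^{(g)}_D(z,\mathbf{u})$ is a \emph{fixed} rational function of the planar series, the location $\rho_D(\mathbf{u})$ of the dominant singularity and its analytic dependence on $\mathbf{u}$ are exactly those of the genus-$0$ problem; only the universal singular exponent changes, from $3/2$ to $-(5g-3)/2$. Since replacing a polynomial factor $n^{\beta}$ by a different power does not affect the leading behaviour of $\log [z^n] M^{(g)}_D(z,\mathbf{u})$ as a function of $\mathbf{u}$, the quasi-power framework gives the same linear mean $\mu_d n$, the same linear variance $\sigma_d^2 n$, and the same (possibly degenerate) Gaussian limit $\mathbf{Z}$ with the identical covariance operator as in genus $0$. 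The $\ell^2$-convergence of the infinite vector $\mathbf{X}_n$ is obtained exactly as for Theorem~\ref{Th1}, from the same tail estimates on the $u_d$-derivatives.

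The hard part will not be the singularity analysis but the uniform analytic control of the scheme decomposition in the presence of infinitely many catalytic variables $u_d$. One must verify that the refined tools used to handle the infinite systems of equations in the planar case survive substitution into the finitely many rational scheme functions $P_{\mathcal{S}}$ --- in particular, that the square-root singular expansions of the planar blocks $W(z,\mathbf{u})$ remain valid uniformly in an $\ell^2$-neighbourhood of $\mathbf{u} = \mathbf{1}$, so that the resulting covariance operator is genuinely bounded. Once this uniformity is secured, the higher genus contributes only a finite, explicit rational overlay and the limit law coincides with the planar one.
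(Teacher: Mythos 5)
Your overall strategy coincides with the paper's: a bijection with one-faced labelled mobiles of genus $g$ in the spirit of Chapuy--Marcus--Schaeffer, extraction of a scheme with at most $4g-2$ vertices and $6g-3$ edges, substitution of planar pieces governed by the genus-$0$ series, singularity analysis, and the same perturbation in the variables $x_d$ for the limit law. The gap is in the one step you dismiss as routine, namely how the singular exponent $-(5g-3)/2$ actually arises. You assert that $M^{(g)}_D$ is a fixed rational function of planar series with square-root singularities and that ``composing the planar square-root singularity through a scheme of genus $g$'' yields $(1-z/\rho_D)^{-(5g-3)/2}$. Carried out literally, that composition gives the wrong exponent: each of the $6g-3$ edges of a cubic scheme is replaced by a sequence of elementary cells with generating function $1/(1-P)$, and if $P$ merely approached $1$ with a square-root singularity one would obtain a factor of order $(1-z/\rho_D)^{-1/2}$ per edge, hence $-(6g-3)/2$ in total, not $-(5g-3)/2$.

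The paper's mechanism is finer. Because the mobiles must satisfy the cycle constraint \eqref{prop:cycle}, the scheme carries labels, and each scheme edge is weighted by the coefficient $[s^{\Delta}]\,1/(1-P(\cdot,s))$ extracting a prescribed label increment $\Delta$; the relevant singularities are the two roots $s=\alpha_{1,2}(t,z)$ of $P=1$. Since at the critical point $P=1$, $P_s=0$, $P_{ss}\neq 0$, $P_R\neq 0$ and $R$ has a square-root singularity, these roots satisfy $\alpha_{\pm}\sim 1\mp c_1(1-z/\rho_D)^{1/4}$ with residue factor $C_{\pm}\sim c_2(1-z/\rho_D)^{-1/4}$: a \emph{quarter} power, not a square root (Lemma~\ref{lem:asymp-S}). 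Summing over all admissible labellings then contributes one further factor of order $(1-z/\rho_D)^{-1/4}$ per independent label gap. For the dominant schemes (cubic, all labels distinct) this gives $|E|+M=(6g-3)+(4g-3)=10g-6$ quarter powers, i.e.\ $(1-z/\rho_D)^{-(5g-3)/2}$ after the differentiation/integration bookkeeping, which is what transfers to $n^{5(g-1)/2}\rho_D^{-n}$. Your proposal reaches the correct exponent only by assertion; without the quarter-power analysis of the label increments (and without the reconstruction algorithm of Proposition~\ref{pro:g-mob-algo}, whose $2k$-to-one count makes the scheme decomposition legitimate), the key asymptotic \eqref{eqTh31} is not established. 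The CLT part of your argument is fine in outline, since the singularity location $\rho_D(t,(x_{2i}))$ is indeed the planar one and only the exponent changes; note, though, that the paper restricts this section to $\overline d=1$ and to schemes with white vertices only, so the ``verbatim'' inheritance of the periodicity you claim is itself something that would need to be written out.
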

Theorem~\ref{Th1} can be easily recovered for planar bipartite maps by setting $g=0$. The main difference lies in the exponent $5(g-1)/2$, which also appears to be universal for rooted maps of genus $g$. Hence Theorem~\ref{Th3} is expected to hold for any $D$ without restriction.

Theorem~\ref{Th3} covers maps on any orientable surface. For the picture to be complete, one would need to derive a similar result for general surfaces, including non-orientable ones (for instance, the projective plane). In the article mentioned above~\cite{BC86}, Bender and Canfield also showed similar asymptotics for the number of rooted maps drawn on a non-orientable surface of type $h$, for $h$ any non-negative half-integer:
\[
	M_n^{(h)} \sim p_g n^{5(h-1)/2} 12^n.
\]
This result has also been rederived bijectively by Chapuy and Do\l{}\c{e}ga~\cite{CD} using some \emph{local} orientations of the surface, but the bijection at play no longer preserves degrees. On the other hand, the key bijection that we will use throughout this work was recently extended by Bettinelli~\cite{Bett} to non-orientable surfaces, but in this case, the family of objects in bijection seem much harder to describe and to enumerate.

\bigskip

Section~\ref{sec2} introduces planar mobiles which, being in bijection with pointed planar maps, will reduce our analysis to simpler objects with a tree structure. Their asymptotic behaviour is derived in Section~\ref{sec3}, first for the simpler case of bipartite maps (i.~e.\ when $D$ contains only even integers), then for families of maps without constraints on $D$. Sections~\ref{sec4} and \ref{sec4.2} are devoted to the proof of the central limit theorem using analytic tools from~\cite{Drmotabook,DGM}. Finally, in Section~\ref{sec5} we discuss combinatorics and asymptotics of bipartite maps on orientable surfaces of higher genus. The expressions we obtain are much more involved than in the planar case, but we obtain similar analytic results.

\section{Planar Mobiles}\label{sec2}

Instead of investigating planar maps themselves, we will follow the principle presented by Chapuy, Fusy, Kang, and Shoilekova in~\cite{CFKS}, whereby pointed planar maps are bijectively related to a certain class of trees called mobiles. (Their version of mobiles differs from the definition originally given in \cite{BDFG}; the equivalence of the two definitions is not shown explicitly in \cite{CFKS}, but \cite{CF} gives a straightforward proof.)

\begin{definition}\label{def:mobiles}
A \emph{mobile} is a planar tree -- that is, a map with a single face -- such that there are two kinds of vertices (black and white), edges only occur as black--black edges or black--white edges, and black vertices additionally have so-called ``legs'' attached to them (which are not considered edges), whose number equals the number of white neighbour vertices. A \emph{bipartite mobile} is  a mobile without black--black edges. The \emph{degree} of a black vertex is the number of half-edges plus the number of legs that are 
attached to it. A mobile is called \emph{rooted} if an edge is distinguished and oriented.
\end{definition}

\begin{figure}
\centering
\includegraphics[width=0.45\linewidth]{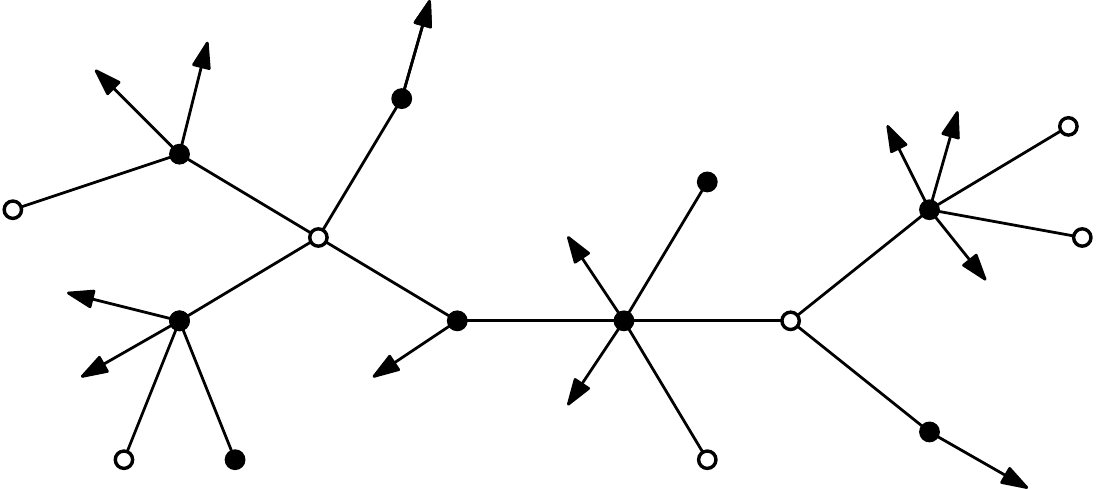}
\caption{An example of a planar mobile.}
\label{fig:mobiles-rec}
\end{figure}

The essential observation is that mobiles are in bijection to pointed planar maps.

\begin{theorem}\label{Th2}
There is a bijection between mobiles that contain at least one black vertex and pointed planar maps, where white vertices in the mobile correspond to non-pointed vertices in the equivalent planar map, black vertices correspond to faces of the map, and the degrees of the black vertices 
correspond to the face valencies. This bijection induces a bijection on the edge sets such that
the number of edges is the same. (Only the pointed vertex of the map has no counterpart.)

Similarly, rooted mobiles that contain at least one black vertex are in bijection to rooted and vertex-pointed planar maps.

Finally, bipartite mobiles with at least two vertices correspond to bipartite maps with at least two vertices,
in the unrooted as well as in the rooted case.
\end{theorem}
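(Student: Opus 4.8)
The plan is to prove the bijection by recalling the explicit two-way construction of Chapuy, Fusy, Kang and Shoilekova~\cite{CFKS} and then checking that it respects all the statistics in the statement. In the map-to-mobile direction one starts from a pointed planar map, labels every vertex by its graph distance to the pointed vertex $v_0$, and processes each face along its contour: one places a single black vertex inside each face, retains every vertex other than $v_0$ as a white vertex, and around each corner of the face draws either a black--white mobile edge, a black--black edge, or a leg, according to the local label configuration at that corner. Conversely, from a mobile one reconstructs the map by the associated closure procedure, recovering a distinguished vertex $v_0$. Since \cite{CFKS} already establish that these two maps are mutually inverse, the core of the argument is to verify the claimed correspondences and the two refinements.

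For the correspondences I would argue directly from the local rule. By construction the white vertices are exactly the non-pointed vertices of the map and the black vertices are in one-to-one correspondence with its faces; moreover the pointed vertex $v_0$ is consumed by the construction and has no counterpart in the mobile. Walking once around a face of valency $k$ creates exactly $k$ incident half-edges or legs at the corresponding black vertex, so its degree equals the face valency. Finally, matching each map edge to the mobile edge (black--white or black--black) produced at the corresponding step yields a bijection of edge sets preserving the number of edges; here one must account for both edge types, while legs contribute no edges.

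For the rooted case, an oriented root edge of the map must be transported to an oriented root edge of the mobile and back. The point is that once an edge is distinguished and oriented, both the map and the mobile are rigid (they carry no nontrivial automorphism fixing the root), so the bijection on unrooted pointed objects lifts to a bijection between rooted-and-vertex-pointed maps and rooted mobiles. For the bipartite case, a map is bipartite precisely when, around every face, the distance labels alternate in parity; this is exactly the condition under which the local rule never produces a black--black edge, i.e.\ the mobile is bipartite, and the argument reverses. The main obstacle is the verification that the local reconstruction rule genuinely inverts the labeling-and-closure procedure -- equivalently, that the distance labels are canonically recoverable from the mobile -- together with the coherent transport of the root data and the edge-count bookkeeping when both black--black and black--white edges are present; this is where one leans most heavily on~\cite{CFKS} (and on~\cite{BDFG,CF} for the compatibility of the two notions of mobile).
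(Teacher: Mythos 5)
Your proposal is correct and follows essentially the same route as the paper: the core of the bijection (the labelling-and-closure construction and its inverse) is delegated to \cite{CFKS} and \cite{CF}, and the rooted case is handled by transporting the root edge and its orientation through the induced bijection on edge sets. The paper's proof is just this citation plus the root-transfer remark; your additional sketch of the local rules and the parity argument for bipartiteness is consistent with the cited construction and adds detail without changing the approach.
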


\begin{proof}
For the proof of the bijection between mobiles and pointed maps we refer to \cite{CF}, where the
bipartite case is also discussed. It just remains to note that the induced bijection on the edges
can be directly used to transfer the root edge together with its direction.
\end{proof}

\subsection{Bipartite Mobile Counting}

We start with bipartite mobiles since they are more easy to count, 
in particular if we consider rooted bipartite mobiles, see \cite{CF}.

\begin{proposition}\label{Pro1}
Let $R \equiv R(t,z,x_1,x_2,\ldots)$ be the solution of the equation
\begin{equation}\label{eqPro11}
R = tz + z\sum_{i\ge 1} x_{2i} {2i-1 \choose i} R^i.
\end{equation}
Then the generating function $M \equiv M(t,z,x_1,x_2,\ldots)$ of bipartite rooted maps satisfies
\begin{equation}\label{eqPro12}
\frac{\partial M}{\partial t} = 2\left( R/z - t\right),
\end{equation}
where the variable $t$ corresponds to the
number of vertices, $z$ to the number of edges, and $x_{2i}$, $i\ge 1$, to the
number of faces of valency $2i$. 
\end{proposition}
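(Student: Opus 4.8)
The plan is to treat the equation~(\ref{eqPro11}) as the defining characterisation of $R$ as a generating function of mobiles (its verification being the standard mobile recursion of the symbolic method, as in the cited \cite{CF}) and to put the real work into the identity~(\ref{eqPro12}). The key conceptual step is to read the derivative combinatorially: since the variable $t$ marks vertices, $\partial M/\partial t$ is the generating function of rooted bipartite maps carrying one \emph{additional distinguished} vertex, with $t$ now recording the remaining vertices — that is, the generating function of rooted and vertex-pointed bipartite maps. By the rooted, bipartite part of Theorem~\ref{Th2}, these are in bijection with rooted bipartite mobiles, under which white vertices correspond to the non-pointed vertices (hence weight $t$), edges to edges (weight $z$), and black vertices of degree $2i$ to faces of valency $2i$ (weight $x_{2i}$). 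So it suffices to show that the generating function of rooted bipartite mobiles equals $2(R/z-t)$.

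Next I would pin down $R$ combinatorially as the series counting bipartite mobiles \emph{planted} at a white vertex, i.e.\ rooted at a half-edge incident to a white vertex and carrying the subtree on that side. The base term $tz$ of~(\ref{eqPro11}) is the isolated white vertex (weight $t$) together with its planting edge (weight $z$), while the recursive term encodes the attachment of a black vertex of degree $2i$ (weight $x_{2i}$, one incident edge of weight $z$): once the slot through which this black vertex is reached has been fixed, the remaining $2i-1$ slots must be filled by $i$ legs and the edges towards neighbouring white subtrees, and the binomial $\binom{2i-1}{i}$ counts exactly the possible cyclic arrangements, each pending white subtree contributing another factor of $R$. Assembling these contributions by the symbolic method reproduces~(\ref{eqPro11}); since its right-hand side is a power series in $R$ with non-negative coefficients and no constant term in $R$, it has a unique formal power series solution.

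Finally I would assemble the generating function of rooted bipartite mobiles from these planted pieces. Cutting the oriented root edge of a rooted mobile separates it into a subtree on its white endpoint and a subtree on its black endpoint; expressing both through $R$ and counting the root edge only once should yield the series $R/z-t$, where the division by $z$ removes the spurious edge-weight and the subtraction of $t$ discards the degenerate configuration consisting of a single white vertex (which carries no black vertex and hence has no map counterpart). The factor $2$ then accounts for the two possible orientations of the root edge, giving $\partial M/\partial t = 2(R/z-t)$.

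I expect the main obstacle to be precisely this last assembly: correctly matching the two planted subtrees to the single series $R$, justifying a clean factor $2$ rather than an overcount, and fixing the $R/z$ and $-t$ corrections so that all degenerate and boundary cases (isolated vertices, the pointed vertex that has no mobile counterpart, and the treatment of $D\subseteq\{2\}$) are handled consistently with the conventions of Theorem~\ref{Th2}. The verification of the weight $\binom{2i-1}{i}$ from the arrangement of legs and edges around a black vertex is routine, but it must be aligned with the chosen rooting so that the exponent of $R$ comes out correctly; getting that convention to dovetail with the edge-cutting decomposition is the delicate point of the argument.
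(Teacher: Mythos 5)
Your proposal is correct and follows essentially the same route as the paper: identify $\partial M/\partial t$ with rooted, vertex-pointed maps, transfer via Theorem~\ref{Th2} to rooted mobiles, establish the recursion~(\ref{eqPro11}) for white-planted mobiles, and recognise $2(R/z-t)$ as the generating function of rooted mobiles. The only cosmetic difference is in the last step: the paper obtains the factor $2$ by noting that mobiles rooted at a white vertex and mobiles rooted at a black vertex are each counted by $R/z-t$ (the latter via the identity $R/z-t=\sum_{i}x_{2i}\binom{2i-1}{i}R^{i}$), whereas you cut the root edge and multiply by $2$ for its orientation — and the exponent bookkeeping you flag as delicate is settled in the paper by first writing the recursion in the geometric-series form $R=tz/\bigl(1-z\sum_{i}x_{2i}\binom{2i-1}{i}R^{i-1}\bigr)$, i.e.\ a white root carrying an ordered sequence of black subtrees.
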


\begin{proof}
Since rooted mobiles can be considered as ordered rooted trees
(which means that the neighbouring vertices of the root vertex 
are linearly ordered and the subtrees rooted at these neighbouring vertices
are again ordered trees), we can describe them recursively.
This directly leads to a functional equation for $R$ of the form
\[
R = \frac{tz}{1- z \sum_{i\ge 1} x_{2i} {2i-1 \choose i} R^{i-1} }
\]
which is apparently the same as (\ref{eqPro11}). Note that the factor ${2i-1 \choose i}$ is
precisely the number of ways of grouping $i$ legs and $i-1$ edges around a black vertex
(of degree $2i$; one edge is already there).

Hence, the generating function of rooted mobiles that are rooted by a white vertex is 
given by $R/z$. Since we have to discount the mobile that consists just of one (white)
vertex, the generating function of rooted mobiles that are
rooted at a white vertex and contain at least two vertices is given by
\begin{equation}\label{eqwb}
R/z - t = \sum_{i\ge 1} x_{2i} {2i-1 \choose i} R^i.
\end{equation}
We now observe that the right-hand side of (\ref{eqwb}) is precisely the
generating function of rooted mobiles that are rooted at a black vertex
(and contain at least two vertices). Summing up, the generating function of
bipartite rooted mobiles (with at least two vertices) is given by
$
2 (R/z -t).
$
Finally, if $M$ denotes the generating function of bipartite rooted maps 
(with at least two vertices) then 
$\frac{\partial M}{\partial t}$ corresponds to rooted maps where a non-root vertex is
pointed (and discounted). Thus, by Theorem~\ref{Th2} we obtain (\ref{eqPro12}).
\end{proof}
It is clear that Formula~\eqref{eqPro12} can be specialised to count $M_D$ for any subset $D$ of even positive integers: 
It suffices to set $x_{2i} = 1$ for $2i\in D$ and $x_{2i} = 0$ otherwise.

\subsection{General Mobile Counting}

We now proceed to develop a mechanism for general mobile counting that is
adapted from \cite{CFKS}. For this, we will require Motzkin paths.
A \emph{Motzkin path} is a path starting at $0$ and going rightwards for a number of steps; the steps are either diagonally upwards ($+1$), straight ($0$) or diagonally downwards ($-1$). A \emph{Motzkin bridge} is a Motzkin path from $0$ to $0$. A \emph{Motzkin excursion} is a Motzkin bridge which stays non-negative.

We define generating functions in the variables $t$ and $u$, which count the number of steps of type $0$ and $-1$, respectively. (Explicitly counting steps of type $1$ is then unnecessary, of course.) The ordinary generating functions of Motzkin bridges, Motzkin excursions, and Motzkin paths from $0$ to $+1$ shall be denoted by $B(t,u)$, $E(t,u)$ and $B^{(+1)}(t,u)$, respectively. 
By decomposing these three types of paths by their last passage through $0$, we arrive at the equations (compare with \cite{CFKS}):
\[
E = 1 + tE + uE^2, \quad 
B = 1 + (t + 2uE)B, \quad
B^{(+1)} = EB.
\]
In what follows we will also make use of bridges where the first step is either of type $0$ or $-1$.
Clearly, their generating function $\overline B$ is given by
$ \overline B = t B + u B^{(+1)} = B(t+uE)$.

When Motzkin bridges are not constrained to stay non-negative, they can be seen as an arbitrary arrangement of a given number of steps $+1,0,-1$. It is then possible to obtain explicit expressions for
\begin{align}
B_{\ell,m} &= [t^\ell u^m] B(t,u) = \binom{l+2m}{l,m,m},  \label{eqB1} \\
B_{\ell,m}^{(+1)} &= [t^\ell u^m] B^{(+1)}(t,u) = \binom{l+2m+1}{l,m,m+1}, \label{eqB2} \\
\overline B_{\ell,m} &= [t^\ell u^m] \overline B(t,u) =  B_{\ell-1,m} + B_{\ell,m-1}^{(+1)} = \frac{l+m}{l+2m}\binom{l+2m}{l,m,m}.  \label{eqB3}
\end{align}

Using the above, we can now finally compute relations for generating functions of proper classes of mobiles. We define the following series, where $t$ corresponds to the
number of white vertices, $z$ to the number of edges, and $x_i$, $i\ge 1$, to the
number of black vertices of degree $i$:
\begin{itemize}
\item $L(t,z,x_1,x_2,\ldots)$ is the series counting rooted mobiles that are rooted at a black vertex and where an additional edge is attached to the black vertex. 
\item $R(t,z,x_1,x_2,\ldots)$ is the series counting rooted mobiles that are rooted at a white vertex and  where an additional edge is attached to the root vertex. 
\end{itemize}
Similarly to the above we obtain the following equations for the generating functions of mobiles and rooted maps.

\begin{figure}
\centering
\includegraphics[width=0.8\linewidth]{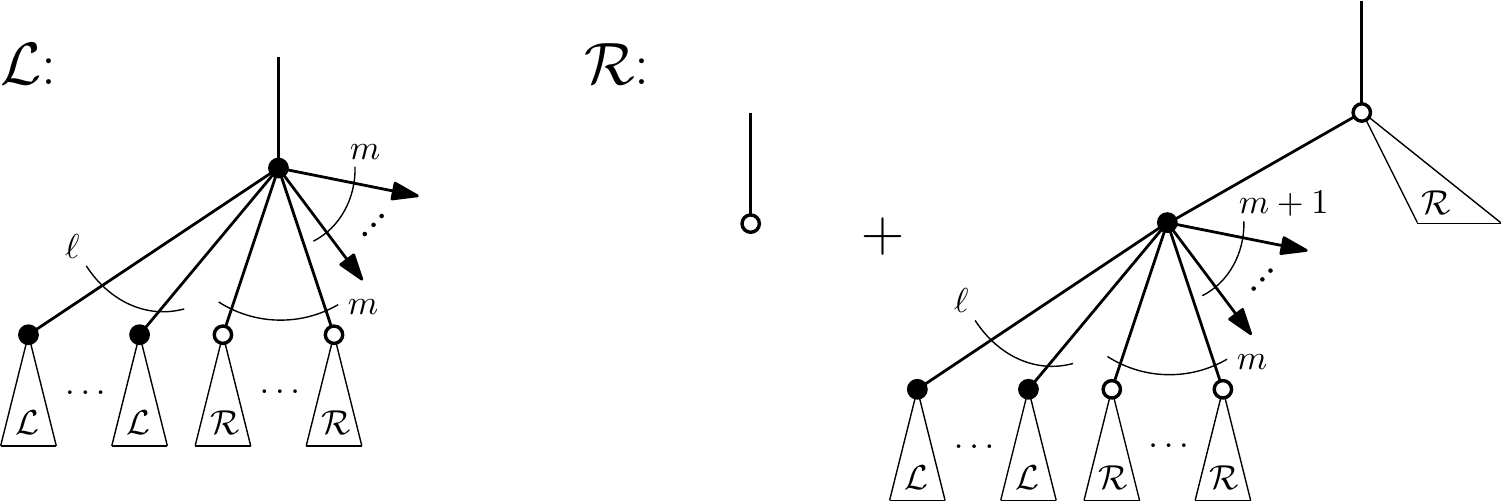}
\caption{Recursive description of mobiles leading to the system of equations~\eqref{eqPro21}.}
\label{fig:mobiles-rec}
\end{figure}

\begin{proposition}\label{Pro2}
Let $L \equiv L(t,z,x_1,x_2,\ldots)$
and 
$R \equiv R(t,z,x_1,x_2,\ldots)$ be the solutions of the system of equations
\begin{equation}
\begin{aligned}
L &= z \sum_{\ell,m} x_{2m+\ell+1} B_{\ell,m} L^\ell R^m,  \\
R &= tz + z \sum_{\ell,m} x_{\ell+2m+2} B_{\ell,m}^{(+1)} L^\ell R^{m+1}, 
%
\end{aligned}\label{eqPro21} 
\end{equation}
and let $T \equiv T(t,z,x_1,x_2,\ldots)$ be given by
\begin{equation}\label{eqPro21bis}
T = 1 + \sum_{\ell,m} x_{2m+\ell} \overline B_{\ell,m} L^\ell R^m,
\end{equation}
where the numbers $B_{\ell,m}$, $B_{\ell,m}^{(+1)}$, and $\overline B_{\ell,m}$ are given by (\ref{eqB1})--(\ref{eqB3}).
Then the generating function $M \equiv M(t,z,x_1,x_2,\ldots)$ of rooted maps satisfies
\begin{equation}\label{eqPro22}
\frac{\partial M}{\partial t} = R/z-t + T,
\end{equation}
where the variable $t$ corresponds to the
number of vertices, $z$ to the number of edges, and $x_i$, $i\ge 1$, to the
number of faces of valency $i$. 
\end{proposition}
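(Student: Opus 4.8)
The plan is to mirror the recursive analysis of Proposition~\ref{Pro1}, now allowing black--black edges, which is exactly what forces the Motzkin-path weights to appear. First I would regard a rooted mobile as a plane tree and decompose it at its root. The two auxiliary series $L$ and $R$ are the planted building blocks: $L$ is a mobile planted through a (black--black) edge at a black vertex, and $R$ is a mobile planted through a (black--white) edge at a white vertex. Since white vertices are only adjacent to black vertices, a white root carries an ordered sequence of black children, while a black vertex of degree $k$ carries, in cyclic order, black--black edges (each continuing as an $L$), black--white edges (each continuing as an $R$), and legs.

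The crucial step is the local rule at a black vertex. Following~\cite{CFKS,CF}, the corners around a black vertex of degree $k$ (equivalently, around a face of valency $k$) must carry labels that return to their initial value after one full turn; reading the incident objects in cyclic order and recording a flat step for each black--black edge, a down step for each black--white edge, and an up step for each leg, this closure condition says precisely that the resulting word is a Motzkin bridge. Marking the planting edge fixes a starting corner and linearises the word: for $L$ the planting edge is a (flat) black--black edge, so removing it leaves a bridge with $\ell$ flat, $m$ down, and $m$ up steps, giving the weight $B_{\ell,m}$ and degree $2m+\ell+1$; for $R$ the first black child is reached through a (down) black--white edge, whose removal leaves a net $+1$ displacement, hence the weight $B^{(+1)}_{\ell,m}$ and degree $\ell+2m+2$. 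Resolving the sequence at the white root as a geometric series (exactly as in the proof of Proposition~\ref{Pro1}) turns these descriptions into the fixed-point system~\eqref{eqPro21}, the extra factor $R$ in the second equation being the tail of the sequence.

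For $T$ I would argue that it is the generating function of mobiles rooted at a \emph{corner} of a black vertex, with the convention that the root corner is immediately followed by a genuine edge rather than a leg; since the root edge of a map is always a true edge, this is the convention compatible with the bijection of Theorem~\ref{Th2}. Under it the linearised word is a bridge whose first step is flat or down, counted by $\overline B_{\ell,m}$, and the black root keeps its full degree $2m+\ell$ (no planting edge, hence no extra factor $z$); the summand $1$ absorbs the degenerate mobile. Finally I would assemble the pieces by partitioning rooted mobiles according to the colour of the tail of the root edge: a white tail contributes white-rooted mobiles with at least two vertices, whose generating function is $R/z-t$ (dividing out the virtual planting edge and discarding the single white vertex, just as in Proposition~\ref{Pro1}), while a black tail contributes $T$. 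Their sum is the generating function of all rooted mobiles containing a black vertex, and by Theorem~\ref{Th2} this equals the generating function of rooted vertex-pointed maps, that is $\partial M/\partial t$, yielding~\eqref{eqPro22}.

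I expect the main obstacle to be the bookkeeping around black vertices: justifying the label-closure rule that produces the Motzkin-bridge condition and, above all, tracking how the type of the distinguished edge (black--black, black--white, or leg) selects the correct path family among $B$, $B^{(+1)}$, and $\overline B$ with the right step multiplicities. Matching the rooting convention of $T$ to the map root edge so that no configuration is double-counted or omitted is the delicate point; once the three weights are pinned down, assembling the system and invoking Theorem~\ref{Th2} is routine.
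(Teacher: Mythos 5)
Your proposal is correct and follows essentially the same route as the paper: a recursive decomposition of rooted mobiles in which the cyclic arrangement around a black vertex is encoded as a Motzkin bridge (black neighbours as flat steps, white neighbours as down steps, legs as up steps), with the planting edge selecting $B_{\ell,m}$, $B^{(+1)}_{\ell,m}$, or $\overline B_{\ell,m}$, and the final identity obtained from Theorem~\ref{Th2} by splitting rooted mobiles according to the colour of the root vertex. The paper's own proof is only a brief sketch of exactly this argument, so your more detailed bookkeeping of the step types and degrees is a faithful (indeed fuller) version of it.
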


\begin{proof}
The system (\ref{eqPro21}) is just a rephrasement of the recursive structure of rooted
mobiles. Note that the numbers $B_{\ell,m}$ and $B_{\ell,m}^{(+1)}$ are used to 
count the number of ways to circumscribe a specific black vertex and considering white vertices, black vertices and ``legs'' as steps $-1$, $0$ and $+1$. The generating function $T$ given in (\ref{eqPro21bis}) is
then the generating function of rooted mobiles where the root vertex is black.

Finally, the equation (\ref{eqPro22}) follows from Theorem~\ref{Th2} since $R/z-t$ corresponds
to rooted mobiles with at least one black vertex where the root vertex is white and $T$ corresponds
to rooted mobiles where the root vertex is black.
\end{proof}


\section{Asymptotic Enumeration}\label{sec3}

In this section we prove the asymptotic expansion (\ref{eqTh11}). 
It turns out that it is much easier to start with bipartite maps.
Actually, the bipartite case has already been treated by Bender and Canfield \cite{BC}.
However, we apply a slightly different approach, which will then be extended
to cover the general case as well the central limit theorem.

\subsection{Bipartite Maps}

Let $D$ be a non-empty subset of even positive integers different from $\{2\}$.
Then by Proposition~\ref{Pro1} the counting problem reduces to the discussion 
of the solutions $R_D \equiv R_D(t,z)$ of the functional equation
\begin{equation}\label{eqPro11-2}
R_D = tz + z\sum_{2i\in D} {2i-1 \choose i} R_D^i
\end{equation}
and the generating function $M_D(t,z)$ that satisfies the relation
\begin{equation}\label{eqPro12-2}
\frac{\partial M_D}{\partial t} = 2\left( R_D/z - t\right).
\end{equation}

Let $\overline d = {\rm gcd}\{i : 2i \in D\}$. Then for combinatorial reasons it 
follows that there only exist maps with $n$ edges for $n$ that are divisible by $\overline d$.
This is reflected by the fact that the equation (\ref{eqPro11-2}) can we rewritten
in the form
\begin{equation}\label{eqPro11-2-1}
\widetilde R = t + \sum_{2i\in D} {2i-1 \choose i} z^{i/\overline d} \widetilde R^i,
\end{equation}
where we have substituted $R_D(t,z) = z \widetilde R(t,z^{\overline d})$. (Recall that we finally 
work with $R_D/z$.) 

\begin{lemma}\label{Le1}
There exists an analytic function $\rho_D(t)$ with $\rho_D(1) > 0$ and $\rho_D'(1) \ne 0$
that is defined in a neighbourhood of $t=1$, and there exist analytic functions 
$g(t,z)$, $h(t,z)$ with $h(1,\rho_D(1))> 0$ that are defined in 
a neighbourhood of $t=1$ and $z=\rho_D(1)$ such that the unique solution $R_D \equiv R_D(t,z)$ of the
equation (\ref{eqPro11-2}) that is analytic at $z=0$ and $t=0$ can be represented as
\begin{equation}\label{eqLe1}
R_D = g(t,z) - h(t,z) \sqrt{1 - \frac z{\rho_D(t)}}.
\end{equation}
Furthermore, the values $z = \rho_D(t) e(2\pi i j/\overline d)$, $j\in \{0,1,\ldots,\overline d-1\}$, are the
only singularities of the function $z\mapsto R_D(t,z)$ on the disc $|z| \le \rho_D(t)$,
and for some sufficiently small $\eta>0$ there exists an analytic continuation of $R_D$ to the range 
$|z| < |\rho_D(t)| + \eta$, $\arg(z- \rho_D(t) e(2\pi i j/\overline d)) \ne 0$, $j\in \{0,1,\ldots,\overline d-1\}$.
\end{lemma}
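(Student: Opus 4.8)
The plan is to apply the standard smooth implicit-function / singularity analysis machinery to the functional equation~\eqref{eqPro11-2-1}, treating $t$ as a parameter close to $1$ and analysing the behaviour of $\widetilde R(t,y)$ (with $y = z^{\overline d}$) as a function of $y$. Write $\widetilde R = \Phi(t,y,\widetilde R)$ where $\Phi(t,y,w) = t + \sum_{2i\in D}\binom{2i-1}{i} y^{i/\overline d} w^i$. Since $D$ is not a subset of $\{2\}$ (so that, after the rescaling, $\Phi$ is genuinely nonlinear in $w$ and its expansion involves at least one term of degree $\ge 2$), this is a smooth positive aperiodic system in the sense of the standard theory, and the generating function $\widetilde R$ has a square-root singularity. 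Concretely, for fixed $t$ near $1$ I would locate the critical point $(y_0(t),w_0(t))$ by the simultaneous equations $w = \Phi(t,y,w)$ and $1 = \Phi_w(t,y,w)$; the aperiodicity we have arranged by factoring out $\overline d$ guarantees $\Phi_{ww} \ne 0$ there, which forces the square-root type.

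**Next I would verify the hypotheses of the implicit-function theorem in the relevant variables and extract the singular expansion.** The key check is that $\Phi_w(t,y_0,w_0) = 1$ while $\Phi_{ww}(t,y_0,w_0) > 0$ and $\Phi_y(t,y_0,w_0) \ne 0$; these hold because all the coefficients $\binom{2i-1}{i}$ are positive and $D$ contains some $2i \ge 4$. From the Weierstrass-type preparation one obtains locally
\[
\widetilde R(t,y) = g_0(t,y) - h_0(t,y)\sqrt{1 - y/y_0(t)},
\]
with $g_0,h_0$ analytic and $h_0(1,y_0(1)) > 0$. Undoing the substitution $y = z^{\overline d}$ and $R_D = z\,\widetilde R$ then yields~\eqref{eqLe1} with $\rho_D(t)^{\overline d} = y_0(t)$ and suitable analytic $g,h$; the factor $\sqrt{1 - z^{\overline d}/y_0(t)} = \sqrt{1 - (z/\rho_D(t))^{\overline d}}$ must be rewritten as $\sqrt{1 - z/\rho_D(t)}$ times an analytic unit near $z = \rho_D(t)$, which absorbs harmlessly into $h$. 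The analyticity and non-vanishing of $\rho_D(t)$ near $t=1$, and the claim $\rho_D'(1) \ne 0$, follow from differentiating the defining system and checking that $\partial y_0/\partial t \ne 0$ at $t=1$ (this uses $\Phi_t \equiv 1 \ne 0$).

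**For the location of all dominant singularities** I would argue that, on $|z| = \rho_D(t)$, the singularities of $z \mapsto R_D(t,z)$ occur exactly where $z^{\overline d} = y_0(t)$, i.e.\ at the $\overline d$ points $z = \rho_D(t)\,e(2\pi i j/\overline d)$: this is the content of the Daffodil-lemma / periodicity analysis for $\overline d$-periodic power series, where the definition of $\overline d$ as the gcd guarantees there are no further singularities on the circle and that the continuation to $|z| < |\rho_D(t)| + \eta$ exists away from these $\overline d$ rays. Here I use $e(\cdot)$ as written in the statement for $z \mapsto e^{z}$-type notation; if it is not predefined I would instead write $\exp(2\pi i j/\overline d)$.

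**The main obstacle** is not the smooth singularity analysis itself but the bookkeeping required to pass cleanly between the $\widetilde R$-picture (where everything is aperiodic and the standard theorems apply verbatim) and the $R_D$-picture (which is genuinely $\overline d$-periodic in $z$), making sure that the square-root singularity in $y$ transfers to the asserted $\sqrt{1 - z/\rho_D(t)}$ form and that the analytic continuation and the complete enumeration of dominant singularities survive the substitution. A secondary technical point, relevant because $D$ may be infinite, is confirming that the series $\Phi$ has a positive radius of convergence in $w$ at the critical point so that the smooth-function framework genuinely applies; this requires controlling the growth of $\binom{2i-1}{i} \sim 4^i/\sqrt{\pi i}$ against the radius, which pins down that $y_0(1) > 0$ is finite and the construction is non-degenerate.
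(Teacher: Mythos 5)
Your overall strategy is the same as the paper's: treat (\ref{eqPro11-2}) (equivalently (\ref{eqPro11-2-1}) after the substitution $R_D(t,z)=z\widetilde R(t,z^{\overline d})$) as a positive functional equation $R=F(t,z,R)$, verify the hypotheses of the smooth implicit-function / square-root-singularity theorem (\cite[Theorem~2.21]{Drmotabook} in the paper), and handle the $\overline d$ conjugate singularities and the analytic continuation by passing to the aperiodic function $\widetilde R$. The identification of the conditions $F_{RR}>0$, $F_z>0$, $F_t>0$, and the rewriting of $\sqrt{1-z^{\overline d}/y_0(t)}$ as $\sqrt{1-z/\rho_D(t)}$ times an analytic unit, are all fine. (One small slip: $\Phi_{ww}\ne 0$ has nothing to do with the aperiodicity arranged by factoring out $\overline d$; it follows, as you also say elsewhere, from $D\not\subseteq\{2\}$ and positivity of the coefficients.)

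The genuine gap is in the infinite-$D$ case, which you relegate to a ``secondary technical point'' and do not actually resolve. The issue is not whether $\Phi$ has positive radius of convergence in $w$ (it always has radius exactly $1/4$); it is whether the characteristic system $R_0=F(1,z_0,R_0)$, $1=F_R(1,z_0,R_0)$ has a solution with $R_0$ \emph{strictly inside} that disc. If it did not, the dominant singularity of $R_D$ would be inherited from the singularity of $F$ itself rather than from the branch point, and the representation (\ref{eqLe1}) could fail (this is exactly the non-generic critical regime that occurs for certain Boltzmann weight sequences). The paper settles this by eliminating $z_0$ to obtain the single equation $\sum_{2i\in D}(i-1)\binom{2i-1}{i}R_0^i=1$, and then observing that the left-hand side, as a function $H(x)$, has radius of convergence $1/4$ and satisfies $H(x)\to\infty$ as $x\to 1/4^-$, because its general term at $x=1/4$ behaves like $\sqrt{i}/(2\sqrt{\pi})$ and is therefore unbounded in $i$; monotonicity then yields a unique root $R_0\in(0,1/4)$, which places the critical point strictly inside the domain of convergence of $F$ and of all its derivatives. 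You quote the right asymptotics for $\binom{2i-1}{i}$ but never draw this conclusion, so the existence of the critical point --- the one non-routine step of the proof --- is left unproven in your write-up.
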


\begin{proof}
From general theory (see \cite[Theorem 2.21]{Drmotabook}) we know that an equation of the
form $R = F(t,z,R)$, where $F$ is a power series with non-negative coefficients, 
has -- usually -- a square-root singularity of the form (\ref{eqLe1}).
We only have to assume that the function $R\to F(t,z,R)$ is neither constant nor a linear polynomial
and that there exist solutions $z_0> 0$, $R_0 > 0$ of the system of equations
\[
R_0 = F(1,z_0,R_0), \qquad 1 = F_R(1,z_0,R_0)
\]
which are inside the range of convergence of $F$. Furthermore, we have to assume that
$F_z(1,z_0,R_0) > 0$ and $F_{RR}(1,z_0,R_0) > 0$  to ensure that (\ref{eqLe1}) holds not
only for $t=1$ but in a neighbourhood of $t=1$,  and the condition $F_t(1,z_0,R_0) > 0$ ensures that
$\rho_D'(1) \ne 0$.

This means that in our case we have to deal with the system of equations
\[
R_0 = z_0 + z_0 \sum_{2i\in D} {2i-1 \choose i} R_0^i, \quad
1 = z_0 \sum_{2i\in D} i{2i-1 \choose i} R_0^{i-1},
\]
or just with a single equation (after eliminating $z_0$)  
\begin{equation}\label{eqR0}
\sum_{2i\in D} (i-1){2i-1 \choose i} R_0^i = 1.
\end{equation}
It is clear that (\ref{eqR0}) has a unique positive solution if $D$ is finite.
(We also recall that all $i\ge 1$, since $2i$ has to be positive.)
If $D$ is infinite, we have to be more precise. Actually, we will show that 
(\ref{eqR0}) has a unique positive solution $R_0 < 1/4$. This follows from 
the fact that 
\[
(i-1) {2i-1 \choose i} \sim \frac{4^i \sqrt i} {2 \sqrt{\pi}}.
\]
Thus, if $D$ is infinite, it follows that the power series 
$
x\mapsto H(x) = \sum_{2i\in D} (i-1){2i-1 \choose i} x^i
$
has radius of convergence $1/4$ and we also have $H(x)\to\infty$ as
$x\to 1/4-$ since each non-zero term satisfies
\[
\lim_{x\to 1/4} (i-1) {2i-1 \choose i} x^i \sim \frac{\sqrt i} {2 \sqrt{\pi}},
\]
which is unbounded for $i\to\infty$. 
Finally, we set
\[
\rho_D(1)= z_0 = \left( \sum_{2i\in D} i{2i-1 \choose i} R_0^{i-1} \right)^{-1}.
\]

It is clear that $F_z(1,z_0,R_0) > 0$, $F_{RR}(1,z_0,R_0) > 0$, and  $F_t(1,z_0,R_0) > 0$.
Hence we obtain the representation (\ref{eqLe1}) in a neighbourhood of
$z = z_0 = \rho_D(1)$ and $t = 1$.

Next, let us discuss the analytic continuation property. 
If $\overline d = {\rm gcd}\{i : 2i \in D\} = 1$ then it follows from the equation 
(\ref{eqPro11-2}) that the coefficients $[z^n] R_D(1,z)$ are positive for
$n\ge n_0$ (for some $n_0$). Consequently \cite[Theorem 2.21]{Drmotabook}
(see also \cite[Theorem 2.16]{Drmotabook}) implies that for some sufficiently small $\eta>0$
there is an analytic continuation to the region
$|z| < |\rho_D(t)| + \eta$, $\arg(z- \rho_D(t)) \ne 0$. 
If $\overline d> 1$, then we can first reduce equation (\ref{eqPro11-2}) to a
an equation (\ref{eqPro11-2-1}) for the function $\widetilde R$ that is given
by $R_D(t,z) = z \widetilde R(t,z^{\overline d})$. We now apply the above method to this
equation and obtain corresponding properties for $\widetilde R$. 
Of course, these properties directly translate to $R_D$, and we are done.
\end{proof}

It is now relatively easy to obtain similar properties for
$M_D(t,z)$.

\begin{lemma}\label{Le2}
The function $M \equiv M_D(t,z)$ that is given by (\ref{eqPro12-2}) has the representation 
\begin{equation}\label{eqLe2}
M_D = g_2(t,z) + h_2(t,z) \left(1 - \frac z{\rho_D(t)} \right)^{3/2}
\end{equation}
in a neighbourhood of $t=1$ and $z=\rho_D(1)$, where 
the functions $g_2(t,z)$, $h_2(t,z)$ are analytic in a neighbourhood of $t=1$ and $z=\rho_D(1)$ and we have $h_2(1,\rho_D(1)) > 0$.
Furthermore, the values $z = \rho_D(t) e(2\pi i j/\overline d)$, $j\in \{0,1,\ldots,\overline d-1\}$, are the
only singularities of the function $z\mapsto M_D(t,z)$ on the disc $|z| \le \rho_D(t)$,
and for some sufficiently small $\eta>0$ there exists an analytic continuation of $M_D$ to the range 
$|z| < |\rho_D(t)| + \eta$, $\arg(z- \rho_D(t) e(2\pi i j/\overline d)) \ne 0$, $j\in \{0,1,\ldots,\overline d-1\}$.
\end{lemma}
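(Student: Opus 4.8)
The plan is to transfer the singularity structure of $R_D$ established in Lemma~\ref{Le1} directly through the relation \eqref{eqPro12-2}. Since $\frac{\partial M_D}{\partial t} = 2(R_D/z - t)$, I would first substitute the representation \eqref{eqLe1} for $R_D$ into the right-hand side, obtaining
\[
\frac{\partial M_D}{\partial t} = \frac 2z\left( g(t,z) - h(t,z)\sqrt{1 - \tfrac z{\rho_D(t)}} \right) - 2t.
\]
The key observation is that the singular part is still only of square-root type, so to recover $M_D$ itself I must integrate with respect to $t$. The plan is to show that this integration raises the exponent of the singular factor from $1/2$ to $3/2$, which is the source of the exponent in \eqref{eqLe2}.

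First I would fix $z$ close to $\rho_D(1)$ and treat $z$ as a parameter, integrating the displayed equation over $t$ from some base point $t_0$ up to $t$. The analytic terms $\frac 2z g(t,z) - 2t$ integrate to an analytic function contributing to $g_2(t,z)$. The essential step is the integral
\[
\int_{t_0}^t \frac{2 h(s,z)}{z}\sqrt{1 - \frac z{\rho_D(s)}}\,\dd s.
\]
Here I would change perspective: rather than viewing the singularity as arising in $z$, note that near the critical point $(1,\rho_D(1))$ the quantity $1 - z/\rho_D(t)$ vanishes along the curve $z = \rho_D(t)$, and because $\rho_D'(1)\ne 0$ (guaranteed by Lemma~\ref{Le1}), this curve is a regular graph. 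Thus I would argue that the factor $\sqrt{1 - z/\rho_D(t)}$, integrated in $t$, behaves like $(1 - z/\rho_D(t))^{3/2}$ up to an analytic multiplier — formally because $\dd(1 - z/\rho_D(t)) = z\rho_D'(t)\rho_D(t)^{-2}\,\dd t$ has a non-vanishing Jacobian factor, so the substitution converting the $t$-integral into an integral in the variable $w = 1 - z/\rho_D(t)$ is analytic and invertible, and $\int w^{1/2}\,\dd w = \tfrac23 w^{3/2}$.

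Making this rigorous is the main obstacle. I would set $w = 1 - z/\rho_D(t)$ and write $h(t,z)$ and the Jacobian as analytic functions of $(w,z)$ in a neighbourhood of the critical point; the singular contribution then takes the shape $\tilde h(w,z)\,w^{3/2}$ with $\tilde h$ analytic, and reverting to the variable $t$ gives the claimed form \eqref{eqLe2}. The sign and positivity statement $h_2(1,\rho_D(1))>0$ follows from $h(1,\rho_D(1))>0$ together with tracking the factor $\tfrac23$ and the non-vanishing derivative, taking care of the orientation of the integration. Finally, the statements about the other singularities $z = \rho_D(t)e(2\pi i j/\overline d)$ and the analytic continuation to $|z| < |\rho_D(t)| + \eta$ transfer immediately: integration in $t$ does not introduce new singularities in $z$, so wherever $R_D$ continues analytically, so does $\frac{\partial M_D}{\partial t}$, and hence $M_D$ by integrating along a path avoiding the singular rays. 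I expect the only delicate point to be verifying that no spurious logarithmic or lower-order terms appear and that the analytic multiplier $\tilde h$ is genuinely regular at $w=0$, which is where the condition $\rho_D'(1)\ne 0$ does the real work.
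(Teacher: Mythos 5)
Your proposal is correct and follows essentially the same route as the paper: the paper's entire proof is a one-line citation of the integration transfer lemma \cite[Lemma 2.27]{Drmotabook}, and your change of variables $w = 1 - z/\rho_D(s)$ (legitimate precisely because $\rho_D'(1) \ne 0$) is exactly the mechanism behind that lemma. The only points left implicit --- splitting the $t$-integral so that the local expansion of $R_D$ is invoked only for $s$ near $1$, and the sign $\rho_D'(1)<0$ needed to conclude $h_2(1,\rho_D(1))>0$ --- are routine.
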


\begin{proof}
This is a direct application of \cite[Lemma 2.27]{Drmotabook}.
\end{proof}

In particular it follows that $M_D(1,z)$ has the singular representation
of the form (\ref{eqLe2}) with a dominant singularity $\left(1 - z/{\rho_D(1)} \right)^{3/2}$
near $z= \rho_D(1)$. The singular representations are of the same kind 
near $z = \rho_D(1) e(2\pi i j/\overline d)$, $j\in \{1,\ldots,\overline d-1\}$, and we have
the analytic continuation property. Hence it follows by~usual singularity
analysis (see for example \cite[Corollary 2.15]{Drmotabook}) that 
there exists a constant $c_D> 0$ such that
\[
[z^n] M_D(1,z) \sim c_D n^{-5/2} \rho_D(1)^{-n}, \qquad n \equiv 0 \bmod \overline d,
\]
which completes the proof of the asymptotic expansion in the bipartite case.

\subsection{General Maps}

We now suppose that $D$ contains at least one odd number. It is easy to observe that
in this case we have $[z^n] M_D(1,z) > 0$ for $n\ge n_0$ (for some $n_0$), so
we do not have to deal with several singularities.
By Proposition~\ref{Pro2} we have to consider the system of equations
for $L_D \equiv L_D (t,z)$,  $R_D \equiv R_D(t,z)$:
\begin{align}
L_D &= z \sum_{i\in D}  \sum_m B_{i-2m-1,m} L_D^{i-2m-1} R_D^m,  \label{eqPro21-2.0} \\
R_D &= tz + z \sum_{i\in D}  \sum_m B_{i-2m-2,m}^{(+1)} L_D^{i-2m-2} R_D^{m+1},  \label{eqPro21-2}
\end{align}
and also the function
\[
T_D = T_D(t,z) = 1 + \sum_{i\in D} \sum_m \overline B_{i-2m,m} L_D^{i-2m} R_D^m.
\]

\begin{lemma}\label{Le3}
There exists an analytic function $\rho_D(t)$ with $\rho_D(1) > 0$ and $\rho_D'(1) \ne 0$
that is defined in a neighbourhood of $t=1$, and there exist analytic functions 
$g(t,z)$, $h(t,z)$ with $h(1,\rho_D(1))> 0$ that are defined in 
a neighbourhood of $t=1$ and $z=\rho_D(1)$ such that 
\begin{equation}\label{eqLe3}
R_D/z-t + T_D = g(t,z) - h(t,z) \sqrt{1 - \frac z{\rho_D(t)}}.
\end{equation}
Furthermore, the value $z = \rho_D(t)$ is the
only singularity of the function $z\mapsto R_D/z-t + T_D$ on the disc $|z| \le \rho_D(t)$,
and for some sufficiently small $\eta>0$ there exists an analytic continuation of $R_D$ to the range 
$|z| < |\rho_D(t)| + \eta$, $\arg(z- \rho_D(t)) \ne 0$.
\end{lemma}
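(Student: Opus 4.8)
The plan is to treat the coupled relations \eqref{eqPro21-2.0}--\eqref{eqPro21-2} as a positive \emph{system} $\mathbf{y} = \mathbf{F}(t,z,\mathbf{y})$ in the vector $\mathbf{y} = (L_D, R_D)$, and to invoke the system analogue of the square-root singularity theorem used in Lemma~\ref{Le1} (the Drmota--Lalley--Woods framework for analytic systems with non-negative coefficients, cf.\ \cite{Drmotabook}). First I would record that $\mathbf{F}$ has non-negative Taylor coefficients, since the multinomial numbers $B_{\ell,m}$ and $B_{\ell,m}^{(+1)}$ of \eqref{eqB1}--\eqref{eqB2} are non-negative. The crucial structural input is \emph{strong connectivity} of the dependency graph: one checks that $\partial F_{L_D}/\partial R_D \neq 0$ (a term with $m \ge 1$ requires some $i \ge 2m+1 \ge 3$) and $\partial F_{R_D}/\partial L_D \neq 0$ (a positive power of $L_D$ requires some $i \ge 2m+3 \ge 3$); both hold as soon as $D$ contains an element $i \ge 3$, which it does since $D \not\subseteq \{1,2\}$.

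Next I would locate the critical point by solving, at $t=1$, the two fixed-point equations together with the vanishing of the characteristic determinant
\[
\det\!\left( I - \mathbf{F}_\mathbf{y}(1,z_0,L_0,R_0) \right) = 0,
\]
which identifies $z_0 = \rho_D(1)$ and the positive critical values $L_0, R_0$. As in the bipartite case, the infinite-$D$ situation needs care: I would verify that the power series defining $\mathbf{F}$ (and hence $T_D$) have a positive radius of convergence and that $(L_0,R_0)$ lies strictly inside it, controlling the tails through the growth of $B_{\ell,m} = \binom{\ell+2m}{\ell,m,m}$ -- exactly the role played by the asymptotics of $(i-1)\binom{2i-1}{i}$ in the proof of Lemma~\ref{Le1}. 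The theorem then yields a common square-root singularity for both $L_D$ and $R_D$ of the form $g_j(t,z) - h_j(t,z)\sqrt{1 - z/\rho_D(t)}$ near $(1,\rho_D(1))$, with $\rho_D(t)$ analytic and $\rho_D'(1) \neq 0$ (the latter from the $t$-derivative of $\mathbf{F}$ being nonzero at the critical point) and $h_j(1,\rho_D(1)) > 0$.

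To pass from $(L_D, R_D)$ to the quantity $R_D/z - t + T_D$, I would argue that it inherits the same singular structure: the map $R_D/z$ is analytic in $R_D$ and in $z$ near $z = \rho_D(1) > 0$, and $T_D$ is a convergent power series in $(L_D, R_D)$ at the critical values, so substituting the square-root expansions and collecting analytic parts produces precisely the representation \eqref{eqLe3} with a genuine $\sqrt{\cdot}$ singularity and $h(1,\rho_D(1)) > 0$. For the single-singularity and analytic-continuation claim I would use that $D$ contains an odd number, which was already observed to force $[z^n]M_D(1,z) > 0$ for $n \ge n_0$; this aperiodicity guarantees that $z = \rho_D(t)$ is the \emph{only} singularity on the circle $|z| = \rho_D(t)$ and provides the continuation to the slit region $|z| < |\rho_D(t)| + \eta$, $\arg(z - \rho_D(t)) \neq 0$.

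I expect the main obstacle to be the combination of strong connectivity with the infinite-dimensional bookkeeping. Unlike Lemma~\ref{Le1}, where a single equation could (in the periodic case) be reduced to a clean scalar fixed-point problem, here one must simultaneously control the coupling between $L_D$ and $R_D$ and the convergence of infinite sums of multinomial coefficients, and then confirm that the characteristic determinant actually vanishes at a point where $\mathbf{F}$ is still analytic. Verifying that this critical point exists, is unique among the positive solutions, and lies inside the common domain of convergence is the technical heart of the argument.
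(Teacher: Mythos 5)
Your proposal follows essentially the same route as the paper: a positive, strongly connected two-dimensional system treated by the system version of the square-root singularity theorem, with the critical point located via the characteristic condition and the main difficulty being to place that point strictly inside the common domain of convergence of $F$ and $G$. The step you correctly flag as the technical heart is carried out in the paper by computing the exact convergence domain $2|L|-|L|^2+4|R|<1$ from the closed form $B(t,u)=1/\sqrt{(1-t)^2-4u}$ and showing that the derivatives of $F$ and $G$ diverge on its boundary, so that the branch of positive solutions continued from $z_0=0$ must meet the characteristic condition strictly inside that domain.
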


\begin{proof}
The system of equations (\ref{eqPro21-2.0})--(\ref{eqPro21-2}) -- which we write 
in short-hand notation as $L_D = F(t,z,L_{D} ,R_D)$, $R_{D} = G(t,z,L_D,R_D)$ -- is 
a strongly connected system of two equations such that $F$ and $G$
can be expressed as power series with non-negative coefficients. 
It is known that such a system of equations has in principle the same analytic properties
(including the singular behaviour of its solutions) as a single equation, see 
\cite[Theorem 2.33]{Drmotabook}; however, we have to be sure that the regions of 
convergence of $F$ and $G$ are large enough.

In particular, if $D$ is finite, then we have a positive algebraic system 
and we are done, see \cite{BD}.
In the infinite case we have to argue in a different way.
First of all, it is clear from the explicit solutions of $E = E(t,u) =
(1-t-\sqrt{(1-t)^2 - 4u})/(2u)$ and 
$B = B(t,u) = 1/\sqrt{(1-t)^2 - 4u}$ that $F$ and $G$ (and all their derivatives
with respect to $L_D$ and $R_D$) 
are certainly convergent
if $2 |L_D| - |L_D|^2 + 4 |R_D| < 1$.
On the other hand, it follows similarly to the bipartite case that
the derivatives of $F$ and $G$ are divergent if $L_D > 0$, $R_D> 0$, and
$2 L_D - L_D^2 + 4 R_D = 1$. To see this 
we consider the function 
\begin{align*}
B(ts, us^2) &= \frac 1{\sqrt{1-2ts+t^2s^2-4us^2}} = \sum_{\ell,m} B_{\ell,m} s^{2m+\ell} t^\ell u^m \\
&= \sum_i s^i \sum_m B_{i-2m,m}t^{i-2m} u^m.
\end{align*}
By singularity analysis it follows (for $t,u> 0$) that 
\[
\sum_m B_{i-2m,m} t^{i-2m} u^m \sim c\, i^{-1/2} h(t,u)^{-i},
\]
where $c>0$ and $h=h(t,u)> 0$ satisfies the equation $1-2th+t^2h^2-4uh^2 = 0$. Similarly, we can consider
derivatives of $F$ which correspond, for example, to sums of the form
\[
\sum_m B_{i-2m,m} m t^{i-2m} u^m \sim c' i^{1/2} h(t,u)^{-i}.
\]
In particular, if $h(t,u)= 1$ (which is the case if $2t-t^2-4u= 1$), then this term
diverges for $i\to \infty$.	Thus, the derivatives of $F$ and $G$ diverge if
$L_D > 0$, $R_D> 0$, and
$2 L_D - L_D^2 + 4 R_D = 1$.

In order to determine the singularity of the system 
$L_D = F(t,z,L_D,R_D)$, $R_{D} = G(t,z,L_D,R_D)$
we have to find positive solutions of $L_0,R_0,z_0$ of the system
\begin{equation}\label{eqLe3x}
L_{0} = F(1,z_0,L_{0} ,R_0), \quad
R_{0} = G(1,z_0,L_{0} ,R_0), \quad
1 = \frac{G_{L_D} F_{R_D}}{1 - F_{L_D}} + G_{R_D}.
\end{equation}
We do this in the following way. Starting with $z_0 = 0$, we increase
$z_0$ and solve the first two equations to get $L_0 = L_0(z_0)$, $R_0 = R_0(z_0)$
till the third equation is satisfied. (Note that for $z_0 = 0$, the right-hand side is
$0$ and, thus, smaller than $1$.) As long as the right-hand side of the third equation is
smaller than $1$, it follows from the implicit function theorem that there is a local 
analytic continuation of the solutions $L_0 = L_0(z_0)$, $R_0 = R_0(z_0)$. Furthermore, since
$L_0> 0$ and $R_0> 0$, we have to be in the region of convergence of the derivatives of 
$F$ and $G$, that is, $2 L_{0} - L_{0}^2 + 4 R_0 <1$. From this it also follows that
the solutions $L_0 = L_0(z_0)$, $R_0 = R_0(z_0)$ naturally extend to a point 
where the right-hand side of the third equation equals $1$, so that the above system has
a solution $(1,z_0, L_0,R_0)$. Of course, at this point the derivatives of $F$ and $G$ have to be finite, which
implies that $(1, z_0, L_0,R_0)$ lies inside the region of convergence of $F$ and $G$.

This finally shows that all assumptions of \cite[Theorem 2.33]{Drmotabook} are satisfied.
Thus, singular representation of type (\ref{eqLe3}) and the analytic continuation property
follow for the functions $L_D = L_D(t,z)$, $R_D=R_D(t,z)$.
Hence, the same kind of properties follows for $T_D = T_D(t,z)$ and consequently also for
$R_D/z - t + T_D$.
\end{proof}

Lemma~\ref{Le3} shows that we are precisely in the same situation as
in the bipartite case (actually, it is slightly easier since there is only
one singularity on the circle $|z| = \rho_D(t)$). Hence we immediately
get the same property for $M_D$ as stated in Lemma~\ref{Le2} and
consequently the proposed asymptotic expansion (\ref{eqTh11}).

\section{Central Limit Theorem for Bipartite Maps}\label{sec4}

Based on this previous result, we now extend our analysis
to obtain a central limit theorem. Actually, this is immediate
if the set $D$ is finite, whereas the infinite case 
needs much more care.

Let $D$ be a non-empty subset of even positive integers different from $\{2\}$.
Then by Proposition~\ref{Pro1} the generating functions 
$R_D \equiv R_D(t,z,(x_{2i})_{2i\in D})$ and $M_D \equiv M_D(t,z,$ $(x_{2i})_{2i\in D})$
satisfy the equations
\begin{equation}\label{eqPro11-3}
R_D = tz + z\sum_{2i\in D} x_{2i} {2i-1 \choose i} R_D^i
\quad \mbox{and}\quad
\frac{\partial M_D}{\partial t} = 2\left( R_D/z - t\right).
\end{equation}

If $D$ is finite, then the number of variables is finite, too, and
we can apply \cite[Theorem 2.33]{Drmotabook} to obtain a representation of 
$R_D$ of the form
\begin{equation}\label{eqRgen}
R_D = g(t,z,(x_{2i})_{2i\in D}) - h(t,z,(x_{2i})_{2i\in D}) \sqrt{1 - \frac z{\rho_D(t,(x_{2i})_{2i\in D})}}.
\end{equation}
A proper extension of the
transfer lemma \cite[Lemma 2.27]{Drmotabook} (where the variables $x_{2i}$
are considered as additional parameters) leads to
\begin{equation}\label{eqMgen}
M_D = g_2(t,z,(x_{2i})_{2i\in D}) + h_2(t,z,(x_{2i})_{2i\in D}) \left(1 - \frac z{\rho_D(t,(x_{2i})_{2i\in D})}\right)^{3/2},
\end{equation}
and finally 
\cite[Theorem 2.25]{Drmotabook} implies a multivariate central limit 
theorem for the random vector ${\bf X}_n = (X_n^{(2i)})_{2i\in D}$
of the proposed form.  

Thus, we just have to concentrate on the infinite case. 
Actually, we proceed there in a similar way; however, we have to
take care of infinitely many variables. There is no real problem to derive
the same kind of representation (\ref{eqRgen}) and (\ref{eqMgen}) if $D$ is
infinite. Everything works in the same way as in the finite case,
we just have to assume that the variables $x_i$ are sufficiently close to $1$.
And of course we have to use a proper notion of analyticity in
infinitely many variables. We only have to apply 
the functional analytic extension of the above cited theorems
that are given in \cite{DGM}. Moreover, in order to obtain 
a central limit theorem we need a proper 
adaption of \cite[Theorem~3]{DGM}. This theorem handles the case of
 a single equation $y = F(z,(x_i)_{i\in I},y)$ for a 
generating function $y = y(z,(x_i)_{i\in I})$ that encodes the
distribution of a random vector $(X_n^{(i)})_{i\in I}$ in the form
\[
y = \sum_n y_n \cdot \mathbb{E} \left(\prod_{i\in I} x_i^{X_n^{(i)}} \right) z^n,
\]
where $X_n^{(i)} = 0$ for $i> cn$ (for some constant $c> 0$) which also
implies that all appearing potentially infinite products are in fact finite.
(In our case this is satisfied since there is no vertex of degree larger than $2n$
if we have $n$ edges.) Note that if we let $x_i = e^{st_i}$, then
\[
\mathbb{E}\left(\prod_{i \in I} x_i^{X_n^{(i)}}\right) = \mathbb{E}\left(e^{s \cdot \sum_{i \in I} t_i X_n^{(i)}}\right)
\]
is exactly the moment-generating function of the projected random variable
\[
\sum_{i \in I} t_i X_n^{(i)}.
\]

As we can see from the proof of \cite[Theorem~3]{DGM},
the essential part is to provide tightness of the involved normalised random vector,
and tightness can be checked with the help of moment conditions.
It is clear that asymptotics of moments for $X_n^{(i)}$ can be calculated with
the help of derivatives of $F$, for example $\mathbb{E} X_n^{(i)} = F_{x_i}/(\rho_D F_z)\cdot n + \bigo(1)$.
This follows from the fact all information on the asymptotic behaviour of the moments 
is encoded in the derivatives of the singularity 
$\rho_D(t,z,(x_i)_{i\in I})$, and by implicit differentiation these derivatives 
relate to derivatives of $F$. More precisely, \cite[Theorem~3]{DGM} says that 
the following conditions are sufficient to deduce tightness of the
normalised random vector:
\[
\sum_{i\in I} F_{x_i} < \infty, \qquad \sum_{i\in I} F_{yx_i}^2 < \infty, \qquad \sum_{i\in I} F_{x_ix_i} < \infty,
\]\vspace{-20pt}
\begin{align*}
F_{zx_i} &= o(1), & F_{zx_ix_i} &= o(1), & F_{yyx_i} &= o(1), & F_{yyx_ix_i} &= o(1), \\
F_{zzx_i} &= \bigo(1), & F_{zyx_i} &= \bigo(1), & F_{zyyx_i} &= \bigo(1), & F_{yyyx_i} &= \bigo(1),
\end{align*}
as $i\to \infty$, where all derivatives are evaluated at $(1,\rho_D,(1)_{i\in I},y(\rho_D))$.

The situation is slightly different in our case since we have to work with
$M_D$ instead of $R_D$. However, the only real difference between 
$R_D$ and $M_D$ is that the critical exponents in the singular representations
(\ref{eqRgen}) and (\ref{eqMgen}) are different, but the behaviour of the singularity
$\rho_D(t,(x_i)_{i\in I})$ is precisely the same. Note that after the integration step
we can set $t=1$. Now tightness for the normalised random vector that is encoded in the
function $M_D$ follows in the same way as for $R_D$, and since the singularity 
$\rho_D(1,(x_i)_{i\in I})$ is the same, we get precisely the same conditions as 
in the case of \cite[Theorem~3]{DGM}.

This means  we just have to check the above conditions for 
\[
F = F(1,z,(x_{2i})_{2i\in D},y ) =  z + z\sum_{2i\in D} x_{2i} {2i-1 \choose i} y^i,
\]
where all derivatives are evaluated at $z = \rho_D$, $x_{2i} = 1$, and $y = R_D(\rho_D) < 1/4$.
However, they are trivially satisfied since 
$\sum_{i\ge 1}  {2i-1 \choose i} i^K y^i  < \infty$
for all $K> 0$ and for positive real  $y < 1/4$.

\section{Central Limit Theorem for General Maps}\label{sec4.2}

We now assume that $D$ contains at least one odd number.
By Proposition~\ref{Pro2} we have to consider the
system of equations 
\begin{align*}
L_D &= z \sum_{i\in D} x_i \sum_m B_{i-2m-1,m} L_D^{i-2m-1} R_D^m,  \\
R_{D} &= tz+ z \sum_{i\in D} x_i \sum_m B_{i-2m-2,m}^{(+1)} L_D^{i-2m-2} R_D^{m+1},
\end{align*} 
for the generating functions
$L_D \equiv L_D(t,z,(x_i)_{i\in D})$ and
$R_D \equiv R_D(t,z,(x_i)_{i\in D})$, the generating function
\[
T_D \equiv T_D(t,z,(x_i)_{i\in D}) = 1 + \sum_{i\in D} x_i\sum_m \overline B_{i-2m,m} L_D^{i-2m} R_D^m
\]
and finally the generating function 
$M_D \equiv M_D(t,z,(x_i)_{i\in D})$ that satisfies the relation
\[
\frac{\partial M_D}{\partial t} = R_D/z-t + T_D.
\]

Again, if $D$ is finite, we can proceed as in the bipartite case by
applying \cite[Theorem 2.33, Lemma 2.27, and Theorem 2.25]{Drmotabook}
which implies the proposed central limit theorem.

If $D$ is infinite, we argue in a similar way as in the bipartite case.
The only difference is that we are not starting with one equation but with a system
of two equations that have the (general) form
\[
L = F(t,z,(x_i)_{i\in D},L,R), \quad R = G(t,z,(x_i)_{i\in D},L,R).
\]
Nevertheless, it is possible to reduce two equations of this form to a single one.
The proof of \cite[Theorem 2.33]{Drmotabook} shows that there are no analytic problems
since we have a positive and strongly connected system. We use the first equation
to obtain an implicit function solution $f = f(t,z,(x_i)_{i\in D},r)$ that satisfies
\[
f = F(t,z,(x_i)_{i\in D},f,r).
\]
Then we substitute $f$ for $L$ in the second equation and arrive at a single 
functional equation 
\[
R = G(t,z,(x_i)_{i\in D}, f(t,z,(x_i)_{i\in D},R),R)
\]
for $R = R_D(t,z,(x_i)_{i\in D})$. Note that the proof of \cite[Theorem 2.33]{Drmotabook}
assures that $f$ is analytic although $L$ and $R$ get singular.
Hence by setting 
\[
H(t,z,(x_i)_{i\in D},r) = G(t,z,(x_i)_{i\in D}, f(t,z,(x_i)_{i\in D},r),r)
\]
we obtain a single equation $R = H(t,z,(x_i)_{i\in D},R)$ for $R = R_D$ and 
we can apply the same method as in the bipartite case. Of course, the calculations
get more involved. For example, we have
\[
H_{x_i} = G_{x_i} + \frac{G_L F_{x_i}}{1-F_L}, 
\]
where
\begin{align*}
F_L &= \rho_D \sum_{i\in D} \sum_m (i-2m-1) B_{i-2m-1,m} L_0^{i-2m-2} R_0^m, \\
F_{x_i} &= \rho_D \sum_m B_{i-2m-1,m} L_0^{i-2m-1} R_0^m, \\
G_L &= \rho_D \sum_{i\in D} \sum_m (i-2m-2) B_{i-2m-2,m}^{(+1)} L_0^{i-2m-3} R_0^m, \\
G_{x_i} &= \rho_D \sum_m B_{i-2m-2,m}^{(+1)} L_0^{i-2m-2} R_0^m.
\end{align*}
From the proof of Lemma~\ref{Le3} we already know that $2L_0 - L_0^2 + 4R_0 < 1$, which implies that
\[
\sum_{i\ge 1}  \sum_m  m^K (i-2m-1) B_{i-2m-1,m} L_0^{i-2m-2} R_0^m < \infty
\]
for all $K> 0$. Furthermore, we have $F_L < 1$ and $G_R < 1$. Hence it follows that
$\sum_{i\in D} H_{x_i} < \infty$.
In the same way, we can handle the other conditions which completes the proof
of Theorem~\ref{Th1}.

\subsection{Weighted Maps}

In order to cope with weighted maps we just have to substitute $x_i = q_i$.
Then the coefficient $M_{\textbf{q},n} := [z^n] M(1,z,\textbf{q})$ is just the weighted sum of all maps
with $n$ edges. Actually, under the condition that $q_i = \Theta( i^\alpha)$ with
$\alpha \ge -\frac 32$ it follows that $M_{\textbf{q},n} \sim c\, n^{-5/2} \gamma^n$ for 
some positive constants $c,\gamma$. The reason is that we can show (almost in the same way as 
in Lemma~\ref{Le3}) that there exist solutions $L_0>0$, $R_0> 0$, $\rho> 0$ with
$2L_0 - L_0^2 + 4 R_0 < 1$ of the corresponding system (\ref{eqLe3x}). The simple argument is
that the series $\sum_{i\ge 1} \sqrt{i}\cdot i^\alpha$ diverges for $\alpha \ge -\frac 32$.
This proves that we have a square-root singularity for the functions $L$ and $R$, etc.

The central limit theorem can be proved also in the same way as above; we just have to
replace $x_i$ by $x_iq_i$. We leave the details to the reader.

\subsection{Mean and Covariance}

Recall that we have used \cite[Theorem 2.33, Lemma 2.27, and Theorem 2.25]{Drmotabook} to prove
the central limit theorem. Actually this method provides us also expressions for the constants
$\mu_d$ ($d\in D$) and a covariance matrix $\Sigma = (\sigma_{d_1,d_2})_{d_1,d_2\in D}$ for the limiting
Gaussian random variable $Z$. 
By \cite[Theorem 2.25]{Drmotabook} we have
\[
\mu_{d} = - \frac{\frac{\partial \rho_D}{\partial x_{d}}(1,{\bf 1})}{\rho_D(1,{\bf 1})}
\quad\mbox{and}\quad 
\sigma_{d_1,d_2} = \mu_{d_1}\mu_{d_2} + \delta_{d_1,d_2} \mu_{d_1}  
- \frac{\frac{\partial^2  \rho_D}{\partial x_{d}^2}(1,{\bf 1})}{\rho_D(1,{\bf 1})}.
\]
In particular, in the bipartite case we have (compare also with \cite[Theorem 2.23]{Drmotabook})
\[
\mu_{2j} = \frac{F_{x_{2j}}}{\rho_D(1,{\bf 1}) F_z}
\]
and
\begin{align*}
\sigma_{2i,2j} & = \mu_{2i}\mu_{2j} + \delta_{i,j} \mu_{2i} + \frac {1}{\rho_D(1,{\bf 1}) F_z^3F_{RR}}\Bigl( F_z^2(F_{RR}F_{x_{2i}x_{2j}} - F_{Rx_{2i}}F_{Rx_{2j}}) \\
&\qquad \qquad  -F_zF_{x_{2i}}(F_{RR}F_{zx_{2j}}-F_{Rz}F_{Rx_{2j}}) -F_zF_{x_{2j}}(F_{RR}F_{zx_{2i}}-F_{Rz}F_{Rx_{2i}}) \\
&\qquad \qquad + F_{x_{2i}}F_{x_{2j}}(F_{RR}F_{zz} - F_{Rz}^2) \Bigr),
\end{align*}
where $F(z,(x_{2i})_{2i\in D}, R) = z + z \sum_{2i\in D} x_{2i} {2i-1 \choose i} R^i$ and all
functions are evaluated at $z = z_0 = \rho_D(1,{\bf 1})$, $x_{2i} = 1$, $R = R_0$, and $z_0, R_0$ are defined as
in Lemma~\ref{Le1}:
\[
R_0 = F(z_0,{\bf 1}, R_0), \qquad 1 = F_R(z_0,{\bf 1}, R_0).
\]
Thus, (in the bipartite case) we get
\[
\mu_{2j} = z_0 {2j-1 \choose j} R_0^{j-1} \quad \mbox{and}\quad
\sigma_{2j,2k} = \mu_{2j} \delta_{j,k} - \mu_{2j}\mu_{2k} (1 + (j-1)(k-1) c)
\]
with $c = 1/(R_0 F_{RR})$.\footnote{Gregory Miermont has pointed out to the second author a very nice probabilistic
interpretation of these representations in terms of monotype Galton--Watson trees and infinite sequences of Gaussian random variables.}
We just have to observe the following relations:
\begin{gather*}
F_z = \frac {R_0}{z_0}, \quad F_{x_{2j}} = z_0 {2j-1 \choose j} R_0^j = R_0 \mu_{2j}, \quad 
F_{x_{2i}x_{2j}} = 0, \quad F_{zz} = 0, \\
F_{Rx_{2j}} =  j z_0 {2j-1 \choose j} R_0^{j-1} = j \mu_{2j}, \quad 
F_{zx_{2j}} = {2j-1 \choose j} R_0^j = \frac{R_0}{z_0} \mu_{2j}, \\
F_{zR} = \frac{F_R}{z_0} = \frac 1{z_0}.
\end{gather*}

In principle the same procedure also works for non-bipartite maps, however, the expressions
are much more involved. Therefore we only state the results for the basic case $D = \mathbb{N}$. 
The corresponding constants $\mu_d$ and $\sigma_{d_1,d_2}$ are given by
\[
\mu_d = A_d + 2 \overline A_d
\]
and
\begin{align*}
\sigma_{d_1,d_2} &= \mu_{d_1} \delta_{d_1,d_2} - \frac 32  \mu_{d_1}\mu_{d_2} + \frac{729}4 (d_1-1)({d_2}-1)A_{d_1-1}A_{{d_2}-1} \\
&+ \frac 92 \left( (\overline A_{d_1} + (d_1-1)\overline A_{d_1-1}) \overline A_{d_2} + (\overline A_{d_2} + ({d_2}-1)\overline A_{{d_2}-1}) \overline A_{d_1} \right) \\
&- \frac 1{18}\left( 39 \overline A_{d_1} + (d_1-1) \mu_{d_1-1} \right)\left( 39 \overline A_{d_2} + ({d_2}-1) \mu_{{d_2}-1} \right) \\
&-\frac 12 \left( \mu_{d_1} A_{d_2} + \mu_{d_2} A_{d_1}\right) + \frac 1{12} \left( ({d_2}-1) \mu_{d_1}  \mu_{{d_2}-1} + (d_1-1) \mu_{d_2} \mu_{d_1-1}    \right),
\end{align*}
where
\[
A_d = \frac 1{6^d} \sum_{m\ge 0} {d-1 \choose d-2m-1, m, m} 4^m
\]
and
\[
\overline A_d = \frac 1{6^d} \sum_{m\ge 0} {d-1 \choose d-2m-2, m,m+1 } 4^m.
\]

\section{Maps of Higher Genus}\label{sec5}

The bijection used in Section~\ref{sec2} relies solely on the orientability of the surface on which the maps are drawn. Therefore it can easily be extended to maps of higher genus, i.~e.\ embedded on an orientable surface of genus $g \in\mathbb{Z}_{>0}$ (while planar maps correspond to maps of genus $0$). The main difference lies in the fact that the corresponding mobiles are no longer trees but rather \emph{one-faced} maps of higher genus, while the other properties still hold. 

However, due to the appearance of cycles in the underlying structure of mobiles, another difficulty arises. Indeed, in the original bijection, vertices and edges in mobiles could carry labels (related to the geodesic distance in the original map), subject to local constraints. In our setting, the legs actually encode the \emph{local} variations of these labels, which are thus implicit. Local constraints on labels are naturally translated into local constraints on the number of legs. But the labels have to remain consistent along each cycle of the mobiles, which gives rise to non-local constraints on the repartition of legs.

In order to deal with these additional constraints, and to be able to control the degrees of the vertices at the same time, we will use a hybrid formulation of mobiles, namely $g$-mobiles, carrying both labels and legs. As before, we will focus on the simpler case of mobiles coming from bipartite maps.

\subsection{Definition of $g$-Mobiles}

Given $g \in\mathbb{Z}_{\ge 0}$, a \emph{$g$-mobile} is a one-faced map of genus $g$ -- embedded on the $g$-torus -- such that there are two kinds of vertices (black and white), edges only occur as black--black edges or black--white edges, and black vertices additionally have so-called ``legs'' attached to them (which are not considered to be edges), whose number equals the number of white neighbour vertices.

Furthermore, for each cycle $c$ of the $g$-mobile, let  $n_{\circ}(c)$, $n_{\rightarrow}(c)$ and $n_{\wn}(c)$ respectively be the numbers of white vertices on $c$, of legs dangling to the left (counterclockwise) of $c$ and of white neighbours to the left of $c$. One has the following constraint (see Figure~\ref{fig:cycle}):
\begin{equation}\label{prop:cycle} n_{\rightarrow}(c) = n_{\circ}(c) + n_{\wn}(c)\end{equation}
The \emph{degree} of a black vertex is the number of half-edges plus the number of legs that are 
attached to it.
A \emph{bipartite} $g$-mobile is a $g$-mobile without black--black edges.
A $g$-mobile is called \emph{rooted} if an edge is distinguished and oriented.\\
Notice that a $0$-mobile is simply a mobile as described in Definition~\ref{def:mobiles}.

\begin{figure}\label{fig:cycle}
\begin{minipage}{0.5\linewidth}
\includegraphics[width=0.95\linewidth]{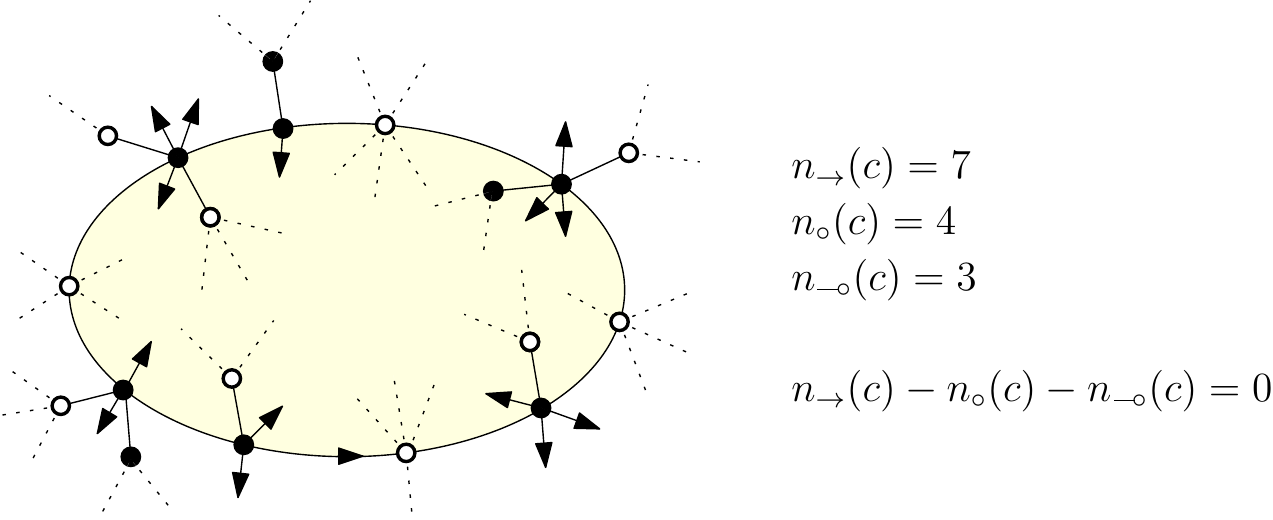}
\end{minipage}
\begin{minipage}{0.4\linewidth}
\caption{An oriented cycle $c$ in a $g$-mobile and the constraint on its left (coloured area). Notice that a similar constraint holds on its right, but is necessarily satisfied thanks to the local properties of a $g$-mobile.}
\end{minipage}
\end{figure}

%

Actually there is a direct analogue of Theorem~\ref{Th2}: $g$-mobiles are in bijection with pointed maps of genus $g$, with precisely the same properties stated in Theorem~\ref{Th2}.
This generalisation of the bijection to higher genus was first given by Chapuy, Marcus, and Schaeffer in~\cite{CMS07} for quadrangulations and by Chapuy in~\cite{Cha09} 
for Eulerian maps, from which we will exploit many ideas in this section.
%



\subsection{Schemes of $g$-Mobiles}

\begin{figure}[b]
\includegraphics[width=0.35\linewidth]{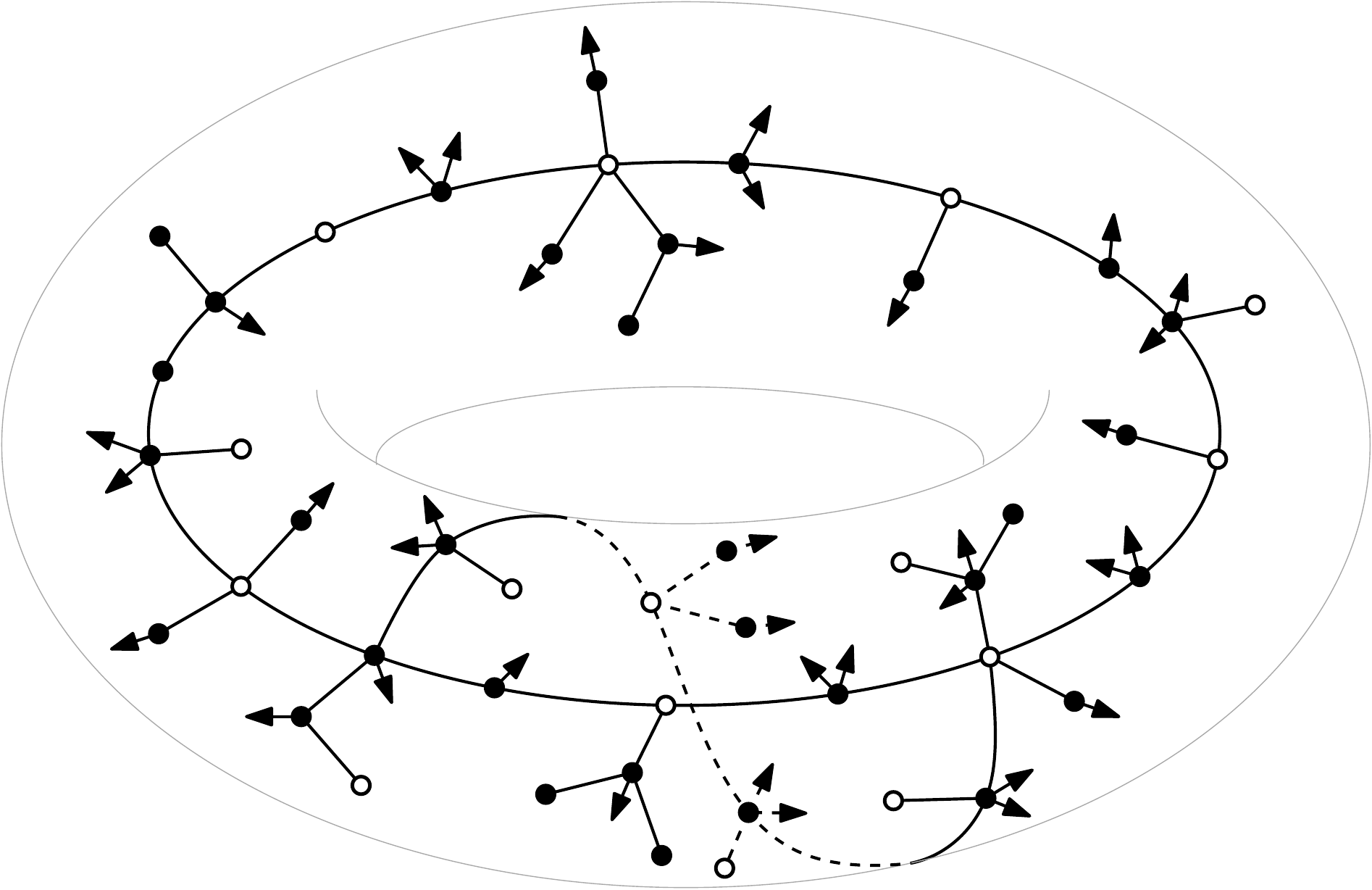} 
\hspace{1cm}
\includegraphics[width=0.35\linewidth]{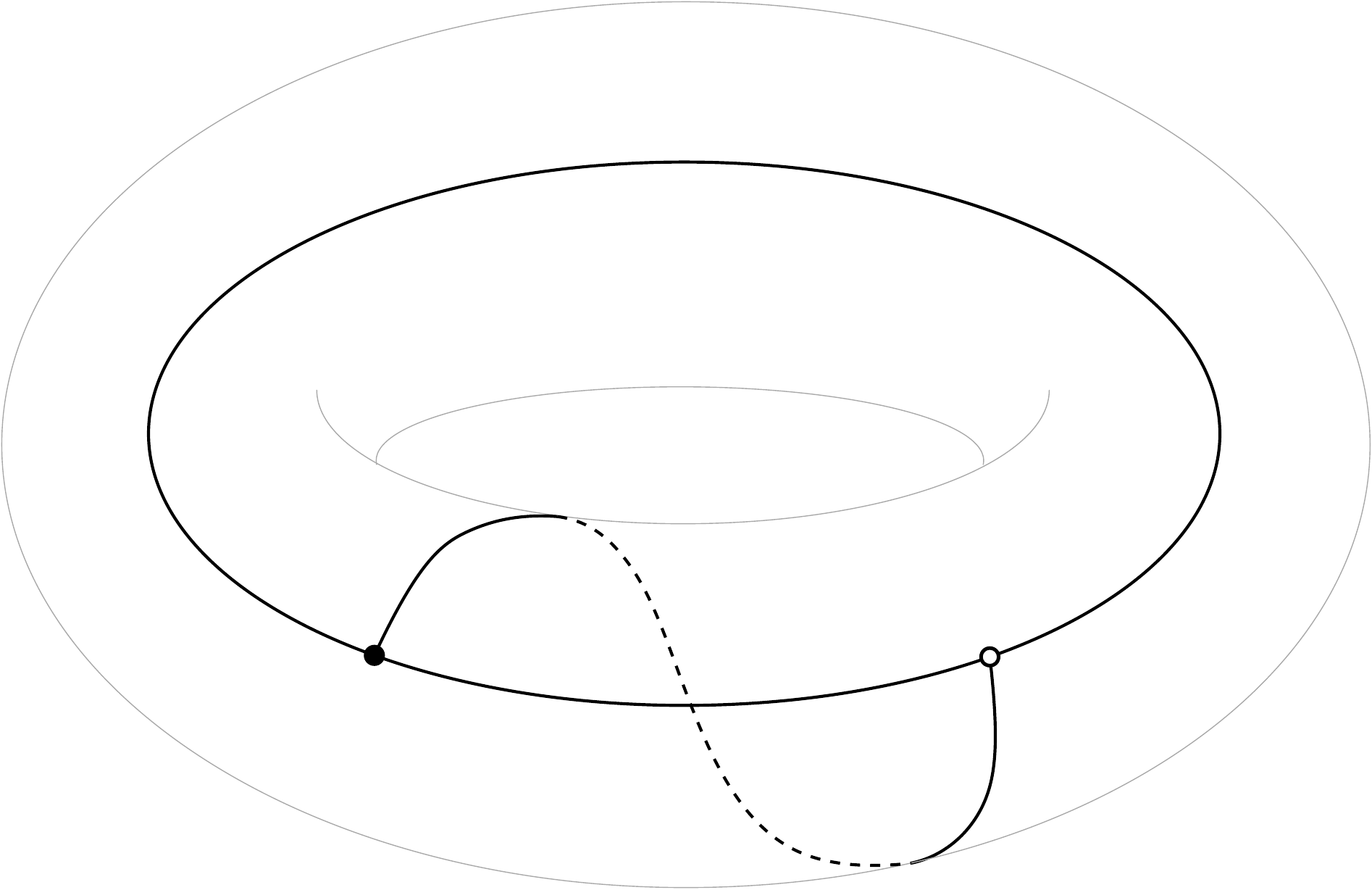}

\caption{A $1$-mobile on the torus and its scheme.}
\label{fig:g-mob}
\end{figure}

 A $g$-mobile is not as easily decomposed as a planar one, due to the existence of cycles. However, it still exhibits a rather simple structure, based on \emph{scheme extraction}.

The \emph{$g$-scheme} (or simply the \emph{scheme}) of a $g$-mobile is what remains when we apply the following operations (see Figure~\ref{fig:g-mob}): first remove all legs, then iteratively remove all vertices of degree $1$ and finally replace any maximal path of vertices of degree $2$ by a single edge.

Once these operations are performed, the remaining object is still a one-faced map of genus $g$, with black and white vertices (note that white--white edges can now occur), where the vertices have minimum degree $3$.

To count $g$-mobiles, one key ingredient is the fact that there is only a finite number of schemes of a given genus. Indeed, letting $e$, $v$ and $v_i$ ($i\ge 3$) be the number of edges, vertices and vertices of degree $i$ in a $g$-scheme, respectively, one gets: 
\begin{displaymath}
2e = 2(v + 2g -1) = \sum_{i\ge 3} i v_i \ge 3 \sum_{i\ge 3} v_i = 3v.
\end{displaymath}
The number of vertices (respectively edges) is then bounded by $4g-2$ (respectively $6g-3$), where this bound is reached for cubic schemes (see an example in Figure~\ref{fig:g-mob}).

To recover a proper $g$-mobile from a given $g$-scheme, one would have to insert a suitable planar mobile into each corner of the scheme and to substitute each edge with some kind of path of planar mobiles. Unfortunately, this cannot be done independently: Around each black vertex, the total number of legs in every corner must equal the number of white neighbours, and around each cycle, \eqref{prop:cycle} must hold.

In order to make these constraints more transparent, we will equip schemes with labels on white vertices and black corners. Now, when trying to reconstruct a $g$-mobile from a scheme, one has to ensure that the local variations are consistent with the global labelling. To be precise, the label variations are encoded as follows (see Figure~\ref{fig:labels}):

\begin{figure}
\includegraphics[width=0.7\linewidth]{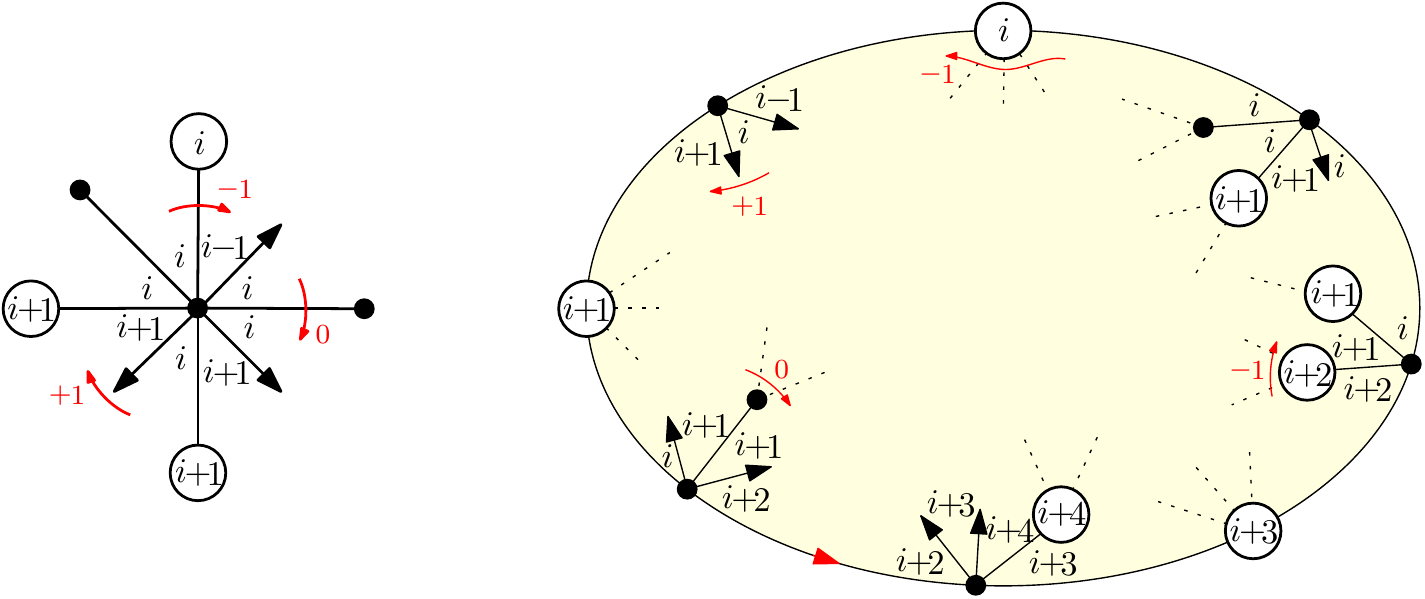}
\caption{The variations of labels around a black vertex and along an oriented cycle.}
\label{fig:labels} 
\end{figure}

\begin{itemize}
\item Around a black vertex of degree $d$, let $(l_1,\dots,l_d)$ be the labels of its corners read in clockwise order. For all $i$,
\begin{displaymath}l_{i+1}-l_{i} = 
\left\lbrace
\begin{array}{cl}
+1  & \mbox{if there is a leg between the two corresponding corners,} \\
0 & \mbox{if there is a black neighbour,}\\
-1 & \mbox{if there is a white neighbour.}
\end{array}\right.
\end{displaymath}
\item Along the left side of an oriented cycle, the label decreases by $1$ after a white vertex or when encountering a white neighbour and increases by $1$ when encountering a leg.
\end{itemize}

The above statements hold for general -- as well as bipartite -- mobiles. In the following, we will only consider bipartite mobiles, as they are much easier to decompose.

\subsection{Reconstruction of Bipartite Maps of Genus $g$}

In the following, it will be convenient to work with rooted schemes. One can then define a canonical labelling and orientation for each edge of a rooted scheme. An edge $e$ now has an origin $e_{-}$ and an endpoint $e_{+}$. The $k$ corners around a vertex of degree $k$ are ordered clockwise and denoted by $c_1,\dots,c_k$.\\

Given a scheme $S$, let $V_{\circ}, V_{\bullet}, C_\circ, C_\bullet$ be the sets of white and black vertices and of white and black corners, respectively. A \emph{labelled scheme} $(S,(l_c)_{c\in V_\circ \cup C_\bullet})$ is a pair consisting of a scheme $S$ and a labelling on white vertices and black corners, with $l_c\ge 0$ for all $c$. Labellings are considered up to translation, as those will not affect local variations. For an edge $e\in{E_S}$ of $S$, we associate a label to each extremity $l_{e_-}, l_{e_+}$. If an extremity is a white vertex of label $l$, its label is $l$. If the extremity is a black vertex, its label is the same as the next clockwise corner of the black vertex.

Let a \emph{doubly-rooted planar mobile} be a rooted (on a black or white vertex) planar mobile  with a secondary root (also black or white). These two roots are the extremities of a path $(v_1,\dots,v_k)$. The \emph{increment} of the doubly-rooted mobile is then defined as $n_{\rightarrow}-n_{\circ}-n_{\wn}$, which is not necessarily 0, as the path is not a cycle.




Similar to~\cite{Cha09}, we present a non-deterministic algorithm to reconstruct a $g$-mobile:\\

\noindent
\textbf{Algorithm.}

	(1) \emph{Choose a labelled $g$-scheme $(S,(l_c)_{c\in V_\circ \cup C_\bullet})$.}

	(2) \emph{For all $v\in V_\bullet$, choose a sequence of non-negative integers $(i_k)_{1\le k\le deg(v)}$, then attach $i_k$ planar mobiles and $i_k+l_{c_{k+1}}-l_{c_{k}}+1$ legs to $c_k$ (the $k^{th}$ corner of $v$).}

	(3) \emph{For all $e\in S$, replace $e$ by a doubly-rooted mobile of increment
	\begin{displaymath}
		\Delta(e)=l_{e_+}-l_{e_-} + \left\lbrace\begin{array}{cl}
			+1  & \mbox{if $e_-$ is white,} \\
			-1 & \mbox{if $e_-$ is black.}
		\end{array}\right.
	\end{displaymath}}

	(4) \emph{On each white corner of $S$, insert a planar mobile.}

	(5) \emph{Distinguish and orient an edge as the root.}

\begin{figure}
\includegraphics[width=0.32\linewidth]{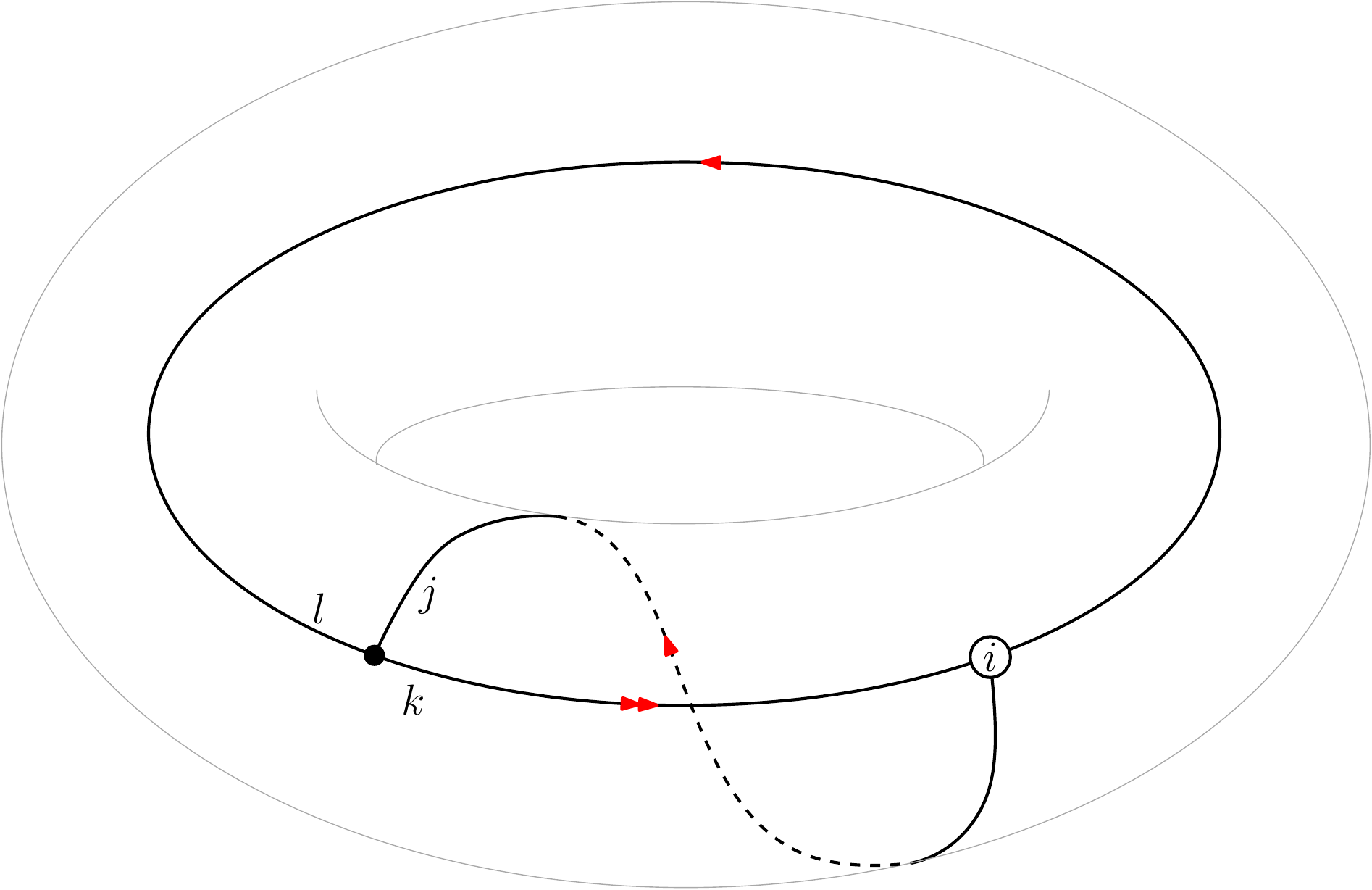}
\includegraphics[width=0.32\linewidth]{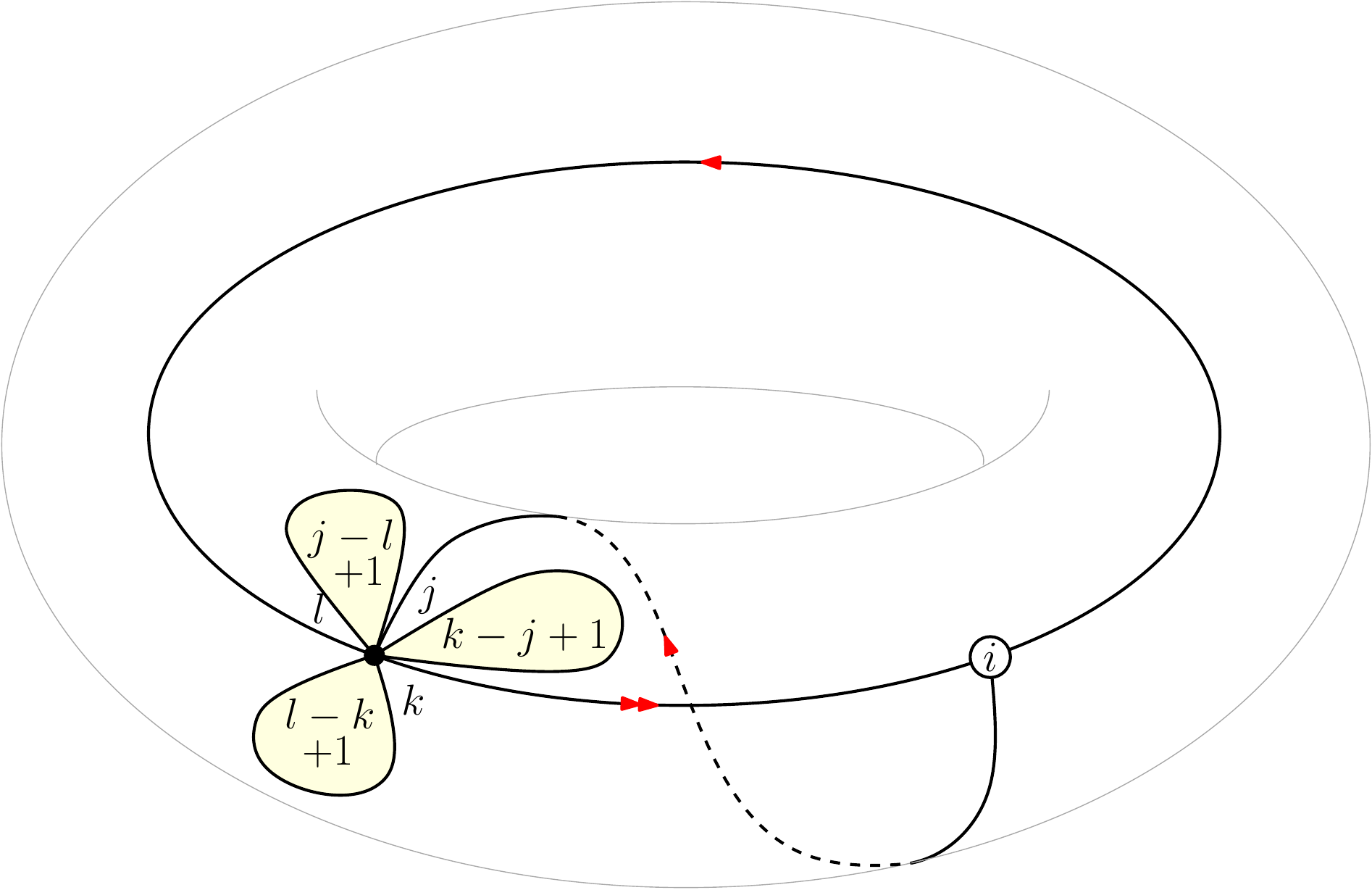}
\includegraphics[width=0.32\linewidth]{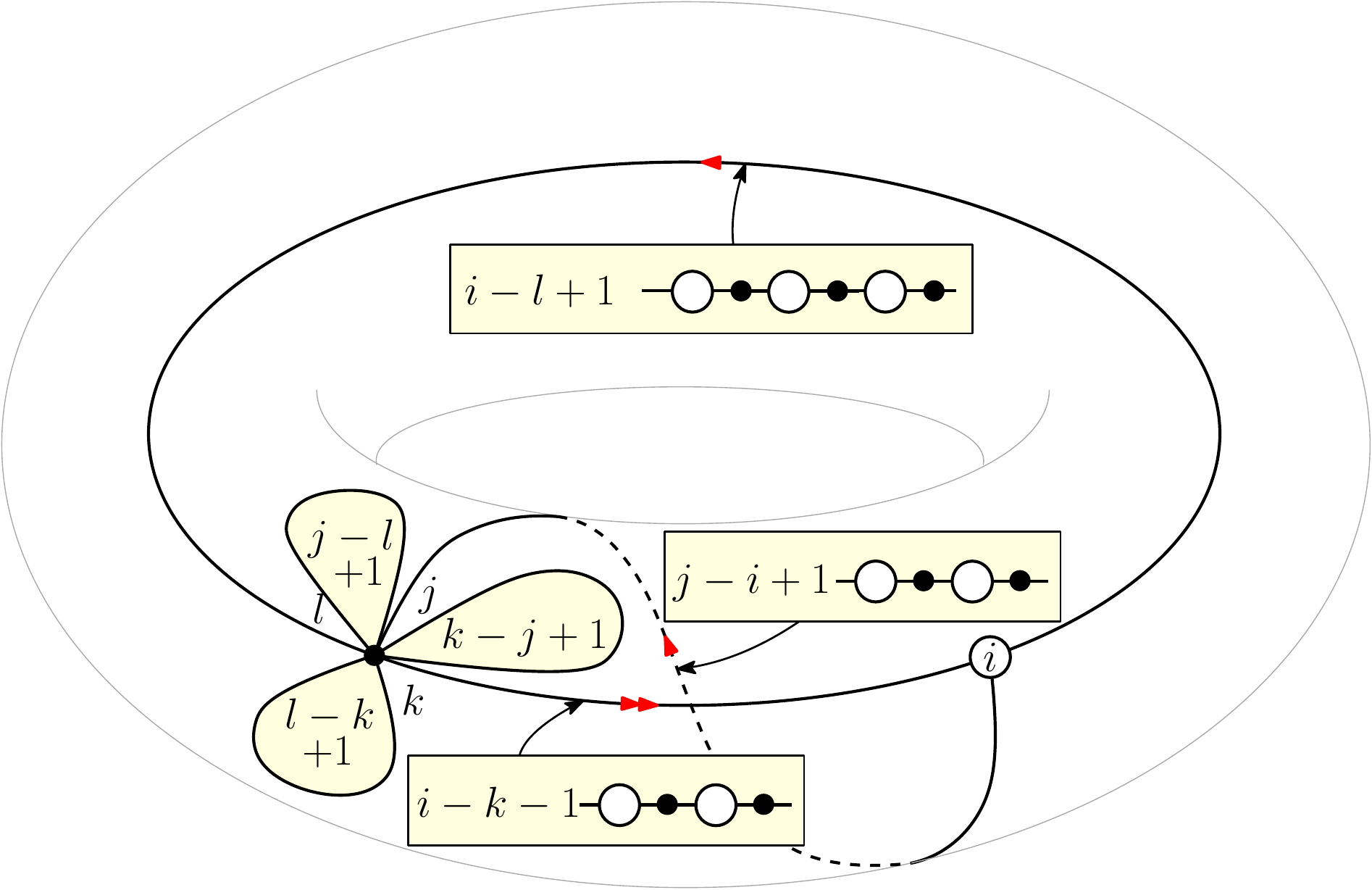}
\caption{Steps (1)--(3) of the algorithm.}
\label{fig:algo} 
\end{figure}

\begin{proposition}\label{pro:g-mob-algo}
Given $g>0$, the algorithm generates each rooted bipartite $g$-mobile whose scheme has $k$ edges in exactly $2k$ ways.
\end{proposition}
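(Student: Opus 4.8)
The plan is to argue that the algorithm, together with the deterministic scheme extraction of Section~\ref{sec5}, sets up a bijection between rooted labelled $g$-schemes equipped with the decoration data of steps (2)--(4) and bipartite $g$-mobiles carrying one distinguished oriented scheme edge; the factor $2k$ will then fall out of the final rooting step (5). First I would check that every output is a genuine rooted bipartite $g$-mobile. Substituting \emph{planar} mobiles into the corners and along the edges of a one-faced genus-$g$ map changes neither the genus nor the number of faces, so the output is again one-faced of genus $g$, and it is bipartite since only bipartite pieces are inserted. The leg count prescribed in step (2), namely $i_k + l_{c_{k+1}} - l_{c_k} + 1$, is exactly what the local rule of Figure~\ref{fig:labels} requires, so the number of legs at each black corner matches its number of white neighbours and the defining leg condition holds vertex by vertex. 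The only genuinely global requirement, and the step I expect to be the main obstacle, is the cycle constraint \eqref{prop:cycle}: here the point is that the increment $\Delta(e)$ imposed in step (3) is defined precisely as $l_{e_+}-l_{e_-}$ corrected by the colour of $e_-$, so that summing the edge increments $\Delta(e)$ together with the black-corner variations of step (2) around any cycle $c$ telescopes to $0$ because the labelling is single-valued; this is exactly the balance $n_{\rightarrow}(c) = n_{\circ}(c) + n_{\wn}(c)$. Thus \eqref{prop:cycle} is automatically satisfied and the output is valid.

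Next I would show that the construction is invertible once one records how the scheme sits inside the output. Applied to any bipartite $g$-mobile $m^{\circ}$, scheme extraction (erase legs, iteratively prune degree-$1$ vertices, smooth maximal degree-$2$ paths) is deterministic and returns a well-defined unrooted scheme $S$; since the inserted planar mobiles are trees and the scheme vertices retain degree $\ge 3$ under the insertions, the extraction strips away exactly the pieces added in steps (2)--(4) and recovers $S$. The labelling is then reconstructed up to translation from the (canonical) local label variations, and the decoration data -- the integers $i_k$, the doubly-rooted mobiles replacing edges, and the planar mobiles on white corners -- is read off uniquely. Hence steps (1)--(4) give a bijection between rooted labelled $g$-schemes-with-decorations and pairs consisting of an unrooted bipartite $g$-mobile together with one distinguished oriented edge of its scheme, namely the image of the scheme root.

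Finally I would extract the multiplicity. Fix a rooted bipartite $g$-mobile $m$ whose scheme has $k$ edges. A run of the algorithm produces $m$ precisely when steps (1)--(4) produce its underlying unrooted mobile and step (5) selects the root of $m$; since the root of $m$ is part of the output, step (5) is forced and contributes no freedom. By the bijection of the previous paragraph, a preimage of $m$ is therefore the same datum as a choice of distinguished oriented scheme edge on the fixed \emph{rooted} mobile $m$. A scheme with $k$ edges has $2k$ oriented edges, and because $m$ is rooted it admits no nontrivial automorphism; consequently the $2k$ choices yield $2k$ pairwise non-isomorphic preimages -- the rigidity of the rooted output is exactly what prevents any collapse by symmetry. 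This gives precisely $2k$ ways, as claimed.
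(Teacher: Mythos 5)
Your proof is correct and follows essentially the same route as the paper: verify that the local label bookkeeping of steps (2)--(3) produces a valid bipartite $g$-mobile, then observe that a run of the algorithm amounts to the rooted mobile together with a choice of oriented scheme edge (the scheme root), of which there are exactly $2k$ since the root of the mobile destroys all symmetries. You spell out two points the paper leaves implicit --- the telescoping argument for the cycle constraint \eqref{prop:cycle} and the invertibility of steps (1)--(4) via scheme extraction --- but these are elaborations of the same argument, not a different one.
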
 

\begin{proof}
One can easily see that the obtained object is indeed bipartite. Attaching planar mobiles and legs added at step (2) in a corner $c_k$ creates new corners, such that:
\begin{itemize}
\item The first carries the same label $l_{c_k}$ as $c_k$, and
\item the last carries the label $l_{c_k} + (i_k + l_{c_{k+1}} - l_{c_{k}} +1) - i_k = l_{c_{k+1}}+1$.
\end{itemize}
The next corner should then be labelled $(l_{c_{k+1}}+1)-1 = l_{c_{k+1}}$, due to the next white neighbour, which is precisely what we want.

In the same fashion, at step (3), a simple counting shows that each edge is replaced by a path such that the labels along it evolve according to the scheme labelling.

We thus obtain a well-formed rooted bipartite $g$-mobile, with a secondary root on its scheme. Since the first root destroys all symmetries, there are exactly $2k$ choices for the secondary root which would give the same rooted $g$-mobile.
\end{proof}

\subsection{Bipartite $g$-Mobile Counting}

Recall that, in the bipartite case, the generating series for rooted planar mobiles $R \equiv R(z,t,(x_{2i})_{i\ge 1})$ satisfies Equation~\eqref{eqwb}:

\begin{equation}
R/z - t = \sum_{i\ge 1} x_{2i} {2i-1 \choose i} R^i.
\end{equation}

A $g$-mobile can now be uniquely decomposed as a scheme where each edge is substituted by a sequence of elementary cells. By definition of a $g$-mobile, one needs to track the \emph{increment}, i.~e.\ the variation of labels along it, of each cell to ensure that the overall cycle constraints are satisfied.

An \emph{elementary cell} is a half-edge connected to a black vertex itself connected to a white vertex with a dangling half-edge. The white vertex has a sequence of black-rooted mobiles attached on each side. For an elementary cell of increment $i$, the black vertex has $k\ge0$ white-rooted mobiles and $k+i+1\ge0$ legs on its left, $l\ge 0$ white-rooted mobiles and $l-i+1$ legs on its right, and its degree is $2(k+l+2)$.  

The generating series $P \equiv P(t,z,R,(x_{2i}),s)$ of a cell, where $s$ marks the increment, is:
\begin{displaymath}P(t,z,R,(x_{2i}),s)= \frac{R^2}{t}\sum_{k,l\ge 0} x_{2(k+l+2)} R^{k+l} \sum_{i=-k-1}^{l+1} {2k+i+1 \choose k} {2l -i +1 \choose l} s^{i}.
\end{displaymath}

Depending on the edge end colours, there might be an additional black or white vertex inserted at the end of the sequence of elementary cells. This is reflected by an extra factor in the generating series $S_e \equiv S_e(t,z,(x_{2i}),s)$:
\begin{displaymath}
S_{(u,v)}(t,z,(x_{2i}),s)=\left\lbrace\begin{array}{cl}
\frac{1}{1-P} & \mbox{if $(u,v)=(\circ,\bullet)$ or $(\bullet,\circ)$,} \\
\frac{stz}{R^2}(\frac{1}{1-P}-1) & \mbox{if $(u,v)=(\circ,\circ)$,} \\
\frac{R^2}{stz}\frac{1}{1-P} & \mbox{if $(u,v)=(\bullet,\bullet)$.}
\end{array}\right.
\end{displaymath}

\begin{figure}
\centering
\includegraphics[width=0.5\linewidth]{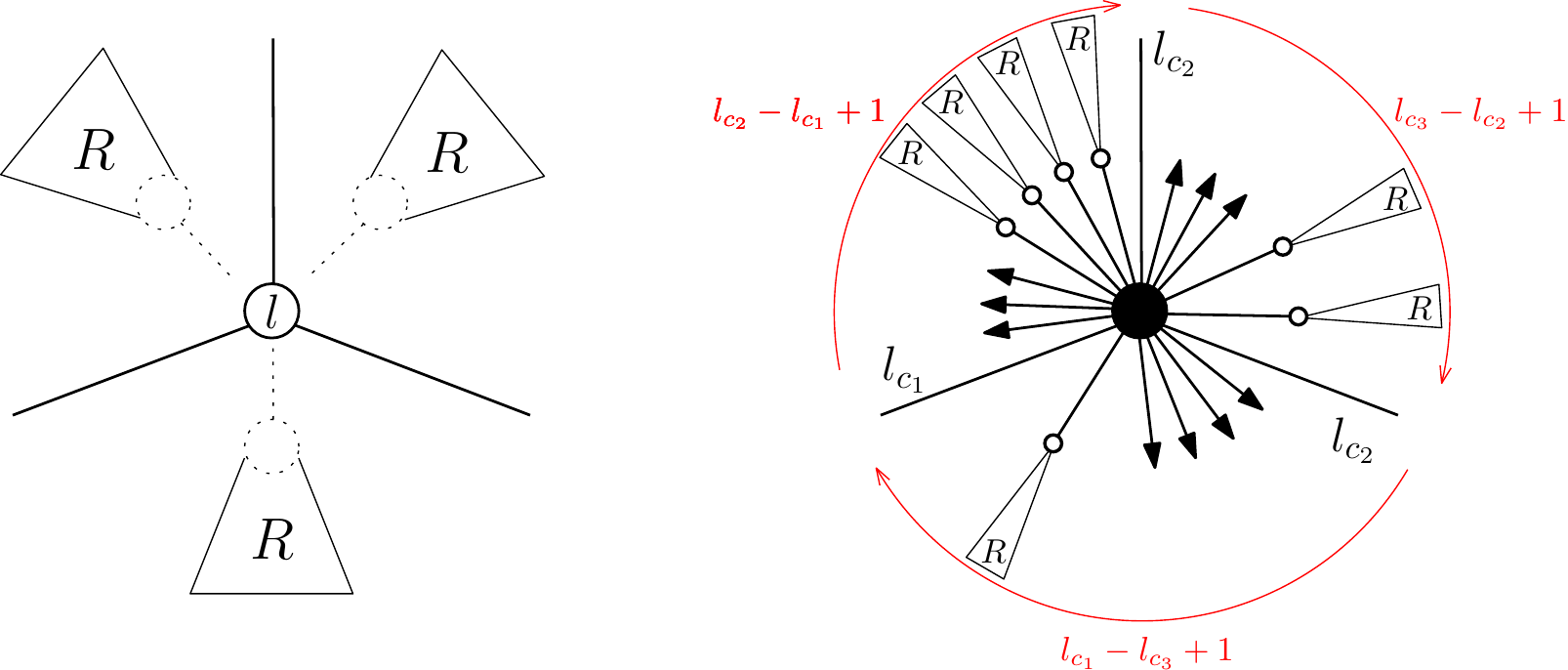}
\caption{Decorating white and black vertices of a scheme.}
\label{fig:subs-vertices} 
\end{figure}

Finally, exploiting steps $(2)$ and $(4)$ in the algorithm of Proposition~\ref{pro:g-mob-algo}, the vertices of the scheme are also decorated in the following way (see Figure~\ref{fig:subs-vertices}). To each white corner is attached a rooted planar mobile, counted by:
\[
	\frac{R}{zt},
\]
while for each black vertex $v$ of degree $d$, with corners $c_1,\dots,c_d$, a sequence of legs and mobiles is attached to the corner $c_k$ ($1\le k\le d$), such that the label variation around $c_k$ equals $l_{c_k+1}-l_{c_k}+1$, which is counted by:
\[
	V_{v} \equiv V_{v}(z,t,(x_{2i})) := \sum_{i_1,\dots,i_d\ge 0} \left(\prod_{k=1}^{d} {2i_k + l_{c_{k+1}} - l_{c_k} +1 \choose i_k} R^{i_k}\right) x_{2(d +\sum i_k)}.
\] 

We can now express the generating series $Q_{S} \equiv Q_S(t,z,R,(x_{2i}))$ of rooted bipartite $g$-mobiles with scheme $S$, :
\begin{equation}\label{eq:gmob}
Q_S = 2\frac{z\partial}{\partial z} \frac{1}{2|E|} z^{|E|}t^{|V_\circ|}\left(\frac{R}{tz}\right)^{|C_\circ|} \sum_{(l_c)\textrm{ labelling}}  \left[ \prod_{e\in E} [s^{\Delta(e)}] S_{(e_-,e_+)} \prod_{v\in V_\bullet} V_v  \right].
\end{equation}

\begin{proposition}
The generating series $M_D^{(g)}:=M_D^{(g)}(t,z,(x_{2i}))$ for the family of rooted bipartite maps of genus $g$, where the vertex degrees belong to $D$, satisfies the relation:
\begin{equation} \frac{\partial M_D^{(g)}}{\partial t} = \frac{2}{z}\sum_{\substack{S \textrm{ scheme} \\ \textrm{of genus } g}} Q_S(t,z,(x_{2i}\mathbb{1}_{\{2i\in D\}})).
\end{equation}
\end{proposition}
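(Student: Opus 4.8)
The plan is to interpret both sides combinatorially and to identify them through the genus-$g$ analogue of the mobile bijection. Since $t$ marks the vertices of a map, the operator $\partial/\partial t$ distinguishes and then discounts one vertex; hence $\partial M_D^{(g)}/\partial t$ is the generating series of rooted bipartite maps of genus $g$ carrying one marked, non-root vertex, with $t$ now counting the remaining vertices, $z$ the edges, and $x_{2i}$ the faces of valency $2i$. By the genus-$g$ extension of Theorem~\ref{Th2}, such rooted and vertex-pointed maps are in bijection with rooted bipartite $g$-mobiles, the non-marked vertices corresponding to white vertices (variable $t$), the faces of valency $2i$ to black vertices of degree $2i$ (variable $x_{2i}$), and the edge sets matching (variable $z$). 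Imposing the degree restriction amounts to the substitution $x_{2i}\mapsto x_{2i}\mathbb{1}_{\{2i\in D\}}$. Thus the left-hand side equals the generating series of rooted bipartite $g$-mobiles with all black-vertex degrees in $D$, and the whole task reduces to enumerating these mobiles.

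Next I would split this series according to the $g$-scheme. As only finitely many schemes of genus $g$ occur (by the bounds $|V|\le 4g-2$ and $|E|\le 6g-3$ derived above), the series is the finite sum over schemes $S$ of the contribution of the mobiles with scheme $S$. To compute that contribution I would use the reconstruction of Proposition~\ref{pro:g-mob-algo}: over a fixed rooted, labelled scheme, steps (2)--(4) decorate each black corner, substitute each edge by a sequence of elementary cells with a possible terminal vertex, and fill each white corner with a planar mobile. These three operations are enumerated, respectively, by the factors $V_v$ at black vertices, by $[s^{\Delta(e)}]\,S_{(e_-,e_+)}$ along the edges, and by $R/(tz)$ at white corners, so that the product over all scheme elements, summed over the admissible labellings $(l_c)$, is precisely the bracketed expression in~\eqref{eq:gmob}.

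It then remains to fix the normalising constants, and here I would lean on the planar template. The operator $2\,z\partial/\partial z$ selects and orients the root edge of the assembled mobile -- this is step (5) of the algorithm -- while the factor $1/(2|E|)$ cancels the $2|E|$-fold redundancy established in Proposition~\ref{pro:g-mob-algo}, where every rooted $g$-mobile arises once for each of the $2|E|$ oriented choices of the secondary (scheme) root; together these produce $Q_S$ as in~\eqref{eq:gmob}. The external prefactor $2/z$ then matches this edge-rooted count to the vertex-pointing $\partial/\partial t$ on the map side, in exact parallel with the planar identity~\eqref{eqPro12}, where $\partial M_D/\partial t = 2(R_D/z - t)$ and the same factor $2$ (for the two colour roles of the distinguished root) and division by $z$ (removing the marked root edge) already appear. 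Summing over all genus-$g$ schemes and applying the degree restriction yields the asserted relation.

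The main obstacle is the non-local constraint~\eqref{prop:cycle}, which is absent in the planar setting: the legs cannot be distributed independently, because the labels must close up consistently around every cycle of the scheme. The labelled-scheme formalism is designed to linearise exactly this, and the delicate verification is that the increment variable $s$, together with the coefficient extraction $[s^{\Delta(e)}]$ and the summation over all labellings $(l_c)$, selects precisely the globally consistent configurations -- that is, that the local label rules packaged into $P$, $S_{(u,v)}$ and $V_v$ reproduce the prescribed variations around black vertices and along oriented cycles, neither omitting nor double-counting any mobile. A secondary but genuine difficulty is keeping the several rooting and pointing factors ($2|E|$, $2\,z\partial/\partial z$ and $2/z$) consistent, since a stray constant here would be easy to introduce and hard to catch.
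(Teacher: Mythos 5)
Your argument is correct and follows essentially the same route as the paper, whose own proof is a single sentence invoking the genus-$g$ bijection, the scheme decomposition encoded in~\eqref{eq:gmob} via Proposition~\ref{pro:g-mob-algo}, and the specialisation $x_{2i}\mapsto x_{2i}\mathbb{1}_{\{2i\in D\}}$. Your write-up is in fact more detailed than the paper's, spelling out the bookkeeping of the rooting and pointing factors that the paper leaves implicit.
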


\begin{proof}
This follows directly from the bijections between $g$-mobiles and maps of genus $g$  and equation~\eqref{eq:gmob}, and by noticing that, as in the planar case, the equation can be specialised to constrained degrees by setting the variables $x_{2i} := 0$ when $2i$ does not belong to $D$.
\end{proof}

\subsection{Asymptotics of $g$-Mobiles}

We proceed similarly to \cite{Cha09}. However, for the sake of brevity we will not
work out all technical details. For example, we will take only care of the (local) singular expansion
and restrict ourselves to the case $\overline d = 1$.

First we need proper expansion of the coefficients of $(1-P)^{-1}$. 
\begin{lemma}\label{lem:asymp-S}
We have, as $|\Delta| \to \infty$, 
$$
[s^{\Delta}] \frac{1}{1-P(t,z,R(t,z),(x_{2i});s)} = C_{{\rm sgn}\, \Delta} (t,z)\alpha_{{\rm sgn}\, \Delta}(t,z)^{|\Delta|}
+ \bigo\left(|\alpha_{\pm}(t,z)| - \delta\right)^{|\Delta|},
$$
where $\alpha_{\pm}(t,z) \sim 1 - c_1\left(1-{z}/{\rho}\right)^{1/4}$ and $C_{\pm}(t,z)\sim c_2 \left(1-{z}/{\rho}\right)^{-1/4}$ 
for some positive constants $c_1, c_2$ and $\rho \equiv \rho(t,(x_{2i}))$
\end{lemma}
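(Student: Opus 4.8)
The plan is to analyze the singularity structure of the rational function $s \mapsto 1/(1-P(t,z,R(t,z),(x_{2i});s))$ in the variable $s$ and extract the asymptotics of its Laurent coefficients via standard singularity analysis for meromorphic functions. First I would fix $t$ and $z$ in a neighbourhood of the critical point and regard $P = P(t,z,R(t,z),(x_{2i});s)$ as a Laurent polynomial in $s$ (the increment $i$ ranges over a finite window $-k-1 \le i \le l+1$ for each $(k,l)$, but the full sum over $k,l$ gives a genuine power series in both $s$ and $1/s$). The poles of $1/(1-P)$ are the roots of the equation $P = 1$ in the $s$-plane. The coefficient $[s^\Delta]$ for $\Delta \to +\infty$ is governed by the root of smallest modulus greater than the annulus of convergence, and for $\Delta \to -\infty$ by the root of largest modulus; these are the $\alpha_+$ and $\alpha_-$ of the statement. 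By a residue computation (or equivalently by writing $1/(1-P) = \sum_j \text{Res}$ and keeping the dominant pole), one obtains $[s^\Delta] \sim C_{\text{sgn}\,\Delta}\, \alpha_{\text{sgn}\,\Delta}^{|\Delta|}$ with an exponentially smaller error term coming from the subdominant poles, which accounts for the $(|\alpha_\pm| - \delta)^{|\Delta|}$ remainder.

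Next I would determine how $\alpha_\pm$ and $C_\pm$ behave as $z \to \rho$. The key point is that $P$ depends on $z$ only through $R = R(t,z)$, which by Lemma~\ref{Le1} (its bipartite specialisation, Equation~\eqref{eqPro11-2} and the representation \eqref{eqLe1}) has a square-root singularity: $R \sim R_0 - c\sqrt{1 - z/\rho}$ near the critical point. The equation defining the dominant pole, $P(t,z,R;\alpha) = 1$, should be analyzed as an implicit equation for $\alpha$ in terms of $R$. I expect that at $z = \rho$ (so $R = R_0$) the two roots $\alpha_+$ and $\alpha_-$ coalesce at a single value (by symmetry of the cell generating function this confluent value should be $s = 1$), so that $P = 1$ has a double root there. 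A double root of an analytic equation unfolds with a square-root behaviour in the perturbation parameter; since the perturbation parameter $R - R_0$ is itself of order $\sqrt{1 - z/\rho}$, the displacement $\alpha_\pm - 1$ inherits a fourth-root behaviour, giving $\alpha_\pm \sim 1 - c_1(1 - z/\rho)^{1/4}$. The prefactor $C_\pm$ is essentially $-1/(s\,\partial_s P)$ evaluated at the pole; since $\partial_s P$ vanishes to first order as the roots merge, $\partial_s P$ is of order $(\alpha_\pm - 1) \sim (1 - z/\rho)^{1/4}$, whence $C_\pm \sim c_2 (1 - z/\rho)^{-1/4}$.

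Concretely, I would expand $P(t,\rho,R_0;s)$ around $s=1$, writing $P(t,z,R;s) = 1 + a(R)(s-1) + b(R)(s-1)^2 + \cdots$ where the linear coefficient $a(R)$ vanishes at $R = R_0$ and is analytic in $R$ (hence in $\sqrt{1-z/\rho}$), while $b(R_0) \ne 0$. Solving $a(R)(s-1) + b(R)(s-1)^2 + \cdots = 0$ for the nonzero root gives $s - 1 \sim -a(R)/b(R)$; substituting $a(R) \sim a'(R_0)(R - R_0) \sim -a'(R_0) c\sqrt{1 - z/\rho}$ produces the fourth-root scaling once one tracks the correct square root. The two branches $\alpha_\pm$ correspond to the two signs of the square root in $R$ versus the two directions $\Delta \to \pm\infty$; verifying that the sign bookkeeping yields positive constants $c_1, c_2$ and that $|\alpha_\pm| < 1$ (so the coefficients decay, as required for convergence of the later scheme sums) is part of this step.

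The main obstacle I anticipate is the confluence analysis of the poles: establishing rigorously that $\alpha_+$ and $\alpha_-$ merge exactly at $z = \rho$ with a genuine double (not higher-order) root, and that no other pole of $1/(1-P)$ interferes or approaches the unit circle faster. This requires checking that $b(R_0) \neq 0$ (nondegeneracy of the double root) and locating the subdominant poles uniformly to justify the error term $\bigo((|\alpha_\pm| - \delta)^{|\Delta|})$. The delicate bookkeeping is that two distinct square-root phenomena compose here — the intrinsic square-root singularity of $R$ in $z$ from Lemma~\ref{Le1}, and the square-root unfolding of the double root of $P = 1$ in $R$ — and their composition is precisely what produces the characteristic fourth-root exponent $1/4$; getting the constants $c_1, c_2$ right and confirming they are positive is where the real care is needed, though the structure of the argument is otherwise a routine application of meromorphic singularity analysis combined with the already-established expansion of $R$.
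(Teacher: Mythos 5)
Your overall strategy coincides with the paper's: regard $1/(1-P)$ as a meromorphic function of $s$ on an annulus, locate the two roots $\alpha_{1,2}$ of $P=1$ near $s=1$ that coalesce at the critical point, obtain the coefficient asymptotics by shifting the Cauchy contour past the dominant pole and collecting its residue (the subdominant poles giving the $\bigo\left((|\alpha_\pm|-\delta)^{|\Delta|}\right)$ error), and read off $C_\pm$ as $1/P_s$ evaluated at the pole. The paper verifies precisely the four conditions you implicitly need at the critical point: $P=1$, $P_s=0$, $P_{ss}\ne 0$, and $P_R\ne 0$.

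There is, however, a genuine error in your ``concrete'' step, which contradicts the (correct) mechanism you describe one paragraph earlier. The ansatz $P(t,z,R;s)=1+a(R)(s-1)+b(R)(s-1)^2+\cdots$ forces $P(\cdot\,;1)\equiv 1$ for all $R$, i.e.\ it throws away exactly the condition $P_R\ne 0$ that drives the splitting of the double root. Under that ansatz $s=1$ remains a root of $P=1$ for every $R$, and the second root sits at $s-1\sim -a(R)/b(R)=\bigo(R-R_0)=\bigo\left((1-z/\rho)^{1/2}\right)$ --- a square-root, not a fourth-root, displacement; no amount of ``tracking the correct square root'' recovers the exponent $1/4$ from this formula, and a persistent pole at $s=1$ would moreover ruin the convergence of the geometric sums over labellings used later in Section~\ref{sec5}. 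The correct local picture is
\begin{equation*}
P(t,z,R;s)-1 = P_R\cdot(R-R_0)+\tfrac 12 P_{ss}\,(s-1)^2+\cdots,
\end{equation*}
where the linear term in $s-1$ is absent at the critical point because $P_s=0$ there: the double root at $s=1$ is unfolded by the nonzero \emph{constant} term $P_R\,(R-R_0)=\bigo\left((1-z/\rho)^{1/2}\right)$, so that $s-1\sim\pm\sqrt{-2P_R(R-R_0)/P_{ss}}=\bigo\left((1-z/\rho)^{1/4}\right)$. The fourth root thus comes from taking the square root of the order-$(1-z/\rho)^{1/2}$ constant term, not from the coefficient $a(R)$. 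With this correction the remainder of your argument --- the residue computation, the prefactor $C_\pm=1/P_s\sim c_2(1-z/\rho)^{-1/4}$, and the exponentially smaller error from subdominant poles --- matches the paper's proof.
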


\begin{proof}
With the help of \eqref{eqR0} it is easy to check that the following three relations hold when
we evaluate at $t$ close to $1$, $x_{2i}$, $2i\in D$, close to $1$, $R = R_0(t,(x_{2i}))$, 
$z = \rho_D  \equiv \rho_D(t,(x_{2i}))$, and $s = 1$:
$$
P = 1, \quad P_s = 0, \quad P_{ss} \ne 0, \quad P_R \ne 0.
$$
Thus we have locally two solutions $s = \alpha_{1,2}(t,z)$ of the equation
\[
P(t,z,R(t,z),(x_{2i}),s) = 1
\]
that are of the form
$\alpha_{1,2}(t,z) = 1 \mp c_1\left(1-{z}/{\rho_D}\right)^{1/4} + \bigo\left(\left(1-{z}/{\rho_D}\right)^{1/2} \right)$.
For $s$ with $|\alpha_{1}(t,z)| < |s| < |\alpha_{2}(t,z)|$ we also have $|P|<1$ and consequently by Cauchy integration 
applied to the Laurent series $s\mapsto P$
\[
[s^{\Delta}] \frac{1}{1-P(t,z,R(t,z),(x_{2i});s)} = \frac 1{2\pi i} \int_{|s| = s_0} \frac{1}{1-P(t,z,R(t,z),(x_{2i});s)} \frac{ds}{s^{\Delta+1}},
\]
where $|\alpha_{1}(t,z)| < s_0 < |\alpha_{2}(t,z)|$. Clearly $s = \alpha_{1,2}(t,z)$ are polar singularities of $1/(1-P)$. Thus, if we shift
the integral to a circle $|s| = |\alpha_{2}(t,z)| + \delta$ (for some $\delta > 0$) and by collecting the residue at 
$s = \alpha_{2}(t,z)$, we get, as $\Delta \to + \infty$,
\[
[s^{\Delta}] \frac{1}{1-P(t,z,R(t,z),(x_{2i});s)} = C_2(t,z) \alpha_2(r,z)^{-\Delta} + \left(|\alpha_{2}(t,z)| + \delta\right)^{-\Delta},
\]
where $C_2(t,z) = 1/P_s(t,z,R(t,z),(x_{2i})),\alpha_2(t,z)) = c_2\left(1-{z}/{\rho_D}\right)^{-1/4} + \bigo(1)$.
Similarly we obtain the corresponding expansion for $\Delta \to - \infty$. 
Thus, setting $\alpha_+(t,z) = \alpha_2(t,z)^{-1}$, $\alpha_-(t,z) = \alpha_1(t,z)$, 
$C_{+}(t,z) = C_{2}(t,z)$, and $C_{-}(t,z) = C_{1}(t,z)$ completes the proof of the lemma.
\end{proof}

%


With the help of these preliminaries we can determine the singular structure of the generating functions $Q_S(t,z,(x_{2i}))$
related to a scheme $S$. For the sake of brevity we will only discuss labelled schemes where all vertices are white. Thus all edges 
are white--white and labels are carried by the white vertices. 
Without loss of generality, one can assume that the minimal label is 0 (by shifting all labels, as only the differences matter).

Recall the expression of $Q_S$, from Equation~\eqref{eq:gmob}, when $S$ only has white vertices:
\[
	Q_S(t,z,(x_{2i})) = \frac{z\partial}{\partial z} \frac{1}{|E|} z^{|E|}t^{|V_\circ|}\left(\frac{R}{tz}\right)^{|C_\circ|} \sum_{(l_c)\textrm{ labelling}}  \prod_{e\in E} [s^{\Delta(e)}] S_{(e_-,e_+)}.
\]

In order to handle the sums over all labellings, define $\lambda : V_\circ \rightarrow [|0,M|]$ (where $M=\textrm{card}(\{\textrm{labels of } V_\circ\})-1$), the relative order of the labels. Labels can then be rewritten as: $$\forall v\in V_\circ, l_v= \sum_{i=1}^{\lambda(v)} \delta_i, \qquad \textrm{with } \delta_i \in\mathbb{Z}_{>0}.$$

Hence we can rewrite the sum as follows, using the asymptotics of Lemma~\ref{lem:asymp-S}:
\begin{align*}
	\sum_{(l_c)\textrm{ labelling}}  \prod_{e\in E} [s^{\Delta(e)}] S_{(e_-,e_+)} &= \sum_{\delta_1,\dots,\delta_M >0}  \prod_{e\in E} \frac{tz}{R^2}[s^{\sum_j A_{e,j}\delta_j}] \left(\frac{1}{1-P}-1\right) \\
	&\sim \left(\frac{tz}{R^2}\right)^{|E|} \sum_{\delta_1,\dots,\delta_M >0} \prod_{e\in E} C_+(t,z)\alpha_+(t,z)^{\sum_j A_{e,j}\delta_j} \\ 
	&\sim \left(\frac{tzC(t,z)}{R^2}\right)^{|E|} \prod_{j=1}^M  \frac{ \prod_e \alpha_+(t,z)^{A_{e,j}} }{1- \prod_e \alpha_+(t,z)^{A_{e,j}}}
\end{align*}

Finally, we obtain that:
\begin{align*}
Q_S(t,z,(x_{2i})) &\sim \frac{z\partial}{\partial z} \frac{1}{|E|} z^{|E|}t^{|V_\circ|}\left(\frac{R}{tz}\right)^{|C_\circ|}  \left(\frac{tzC(t,z)}{R^2}\right)^{|E|} \prod_{j=1}^M  \frac{ \prod_e \alpha_+(t,z)^{A_{e,j}} }{1- \prod_e \alpha_+(t,z)^{A_{e,j}}}  \\
&\sim \frac{z\partial}{\partial z} \frac{1}{|E|} t^{|V_\circ|-|E|} C(t,z)^{|E|}  \frac{ 1 }{(1- \alpha_+(t,z))^M} \\
&\sim c_3\frac{z\partial}{\partial z} \frac{1}{|E|} \left(1-\frac{z}{\rho_D}\right)^{(-|E| - M)/4}
\end{align*}

The main contribution will then come from cubic schemes with maximal $M$, i.~e.\ where all labels are distinct. Thus $|E|=6g-3, M=|V|-1=4g-3$.
$$
Q_S(t,z,(x_{2i})_{2i\in D}) \sim c_3\frac{z\partial}{\partial z} \frac{1}{|E|} \left(1-\frac{z}{\rho_D(t,(x_{2i})_{2i\in D})}\right)^{-5g/2+3/2}
$$

Similar asymptotics can be derived --- with more technical computations --- for the mobiles where the scheme also has black vertices. 

Summing up over all the dominant schemes of genus $g$, and after an integration step, we recover the expected singular behaviour
$$
M^{(g)}_D(t,z,(x_{2i})_{2i\in D}) \sim c_4\frac{z\partial}{\partial z} \frac{1}{2|E|} \left(1-\frac{z}{\rho(t,(x_{2i})_{2i\in D})}\right)^{-5g/2+5/2}
$$
which corresponds to the asymptotics given in Theorem~\ref{Th3} (when we set $t=1$ and $x_{2i} = 1$, $2i \in D$). 
The central limit theorem follows as in the planar case by varying $x_{2i}$ around 1.

As a final note, an expression of the same flavour as Equation~\eqref{eq:gmob} can be derived for $g$-mobiles coming from non-bipartite maps. However, the expression becomes much more involved and it seems quite difficult to extract asymptotics, though it should definitely have the same shape.

\medskip\noindent
{\bf Acknowledgements.} The authors are very grateful to Mireille Bousquet-Melou, Guillaume Chapuy, and Gr\'egory Miermont for their help and for several valuable remarks. We also thank the anonymous reviewers for their helpful suggestions and comments.

\bibliographystyle{plain}
\bibliography{Bibliography}

\appendix

\end{document}